\let\hat=\widehat
\let\tilde=\widetilde
\numberwithin{equation}{subsection}
\newtheorem{conjecture}{Conjecture} 
\newtheorem{theorem}{Theorem} 
\newtheorem{question}{Question} 
\newtheorem{proposition}[equation]{Proposition}
\newtheorem{lemme}[equation]{Lemma}
\theoremstyle{remark}
\DeclareMathOperator{\DR}{DR}
\DeclareMathOperator{\reg}{reg}
\DeclareMathOperator{\ord}{ord}
\DeclareMathOperator{\id}{id}
\def\cartesien{\ar@{}[rd]|{\square}}
\DeclareMathOperator{\car}{car}
\DeclareMathOperator{\Supp}{Supp}
\DeclareMathOperator{\divi}{div}
\DeclareMathOperator{\Sing}{Sing}
\DeclareMathOperator{\Reg}{Reg}
\DeclareMathOperator{\hol}{hol}
\DeclareMathOperator{\El}{El}
\DeclareMathOperator{\rg}{rg}
\DeclareMathOperator{\Sl}{Sl}
\DeclareMathOperator{\Irr}{Irr}
\DeclareMathOperator{\Sol}{Sol}
\DeclareMathOperator{\dR}{dR}
\DeclareMathOperator{\an}{an}
\DeclareMathOperator{\rd}{rd}
\DeclareMathOperator{\et}{\text{ét}}
\DeclareMathOperator{\nb}{nb}
\author[J.-B.~Teyssier]{Jean-Baptiste Teyssier}
\curraddr{Freie Universität Berlin, Mathematisches Institut, Arnimallee 3, 14195 Berlin, Germany}
\email{teyssier@zedat.fu-berlin.de}
\title{A Boundedness theorem for nearby slopes of holonomic $\mathcal{D}$-modules}
\begin{document}

\begin{abstract}
Using twisted nearby cycles, we define a new notion of slopes for complex holonomic $\mathcal{D}$-modules. We prove a boundedness result for these slopes, study their functoriality and use them to characterize regularity. For a family of (possibly irregular) algebraic connections $\mathcal{E}_t$ parametrized by a smooth curve, we deduce under natural conditions an explicit bound for the usual slopes of the differential equation satisfied by the family of irregular periods of the $\mathcal{E}_t$. This generalizes the regularity of the Gauss-Manin connection proved by Katz and Deligne. Finally, we address some questions about analogues of the above results for wild ramification in the arithmetic context.
\end{abstract}
\maketitle

Let $V$ be a smooth algebraic variety over a finite field of characteristic $p>0$, and let $U$ be an open subset in $V$ such that $D:=V\setminus U$ is a normal crossing divisor. Let $\ell$ be a prime number different from $p$. Using restriction to curves, Deligne defined  \cite{Delignebornee}  a notion of $\ell$-adic local system on $U$ with bounded ramification along $D$. Such a definition is problematic to treat functoriality questions: the direct image of a local system is not a local system any more, duality does not commute with restriction in general. In this paper, we investigate the characteristic 0 aspect of this problem, that is the 
\begin{question}
Let $X$ be a complex manifold. Can one define a notion of holonomic $\mathcal{D}_X$-module with bounded irregularity which has good functoriality properties?
\end{question}
In dimension 1, to bound the irregularity number of a $\mathcal{D}$-module with given generic rank amounts to bound its slopes. Let $\mathcal{M}$  be a holonomic $\mathcal{D}_X$-module and let $Z$ be a hypersurface of $X$. Mebkhout \cite{Mehbgro}  showed that the \textit{irregularity complex} $\Irr_{Z}(\mathcal{M})$  of $\mathcal{M}$  along $Z$ is a perverse sheaf endowed with a $\mathds{R}_{>1}$ increasing locally finite filtration by sub-perverse sheaves $\Irr_{Z}(\mathcal{M})(r)$. In dimension one, this construction gives back the usual notion of slope modulo the change of variable $r\longrightarrow 1/(r-1)$. The \textit{analytic slopes of $\mathcal{M}$ along $Z$} are the $r>1$ for which the supports of the graded pieces of $(\Irr_{Z}(\mathcal{M})(r))_{r>1}$ are non empty. \\ \indent
The existence of a uniform bound  in  $Z$ is not clear a priori. We thus formulate the following
\begin{conjecture}\label{conj0}
Locally on $X$, the set of analytic slopes of a holonomic 
$\mathcal{D}_X$-module is bounded.
\end{conjecture}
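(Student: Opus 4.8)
The plan is to reduce to an explicit model via resolution of turning points and then to descend along the resolution; the descent step is where I expect the real difficulty. First I would reduce to a single meromorphic connection. The statement being local, fix $x\in X$. The irregularity complex is additive along distinguished triangles of holonomic $\mathcal D_X$-modules, and Mebkhout's $\mathds{R}_{>1}$-filtration behaves compatibly, so the set of analytic slopes of $\mathcal M$ is contained in the union of those of its finitely many simple subquotients; moreover, for a closed immersion $i$ the irregularity complex of $i_+\mathcal N$ along $Z$ is the direct image of the irregularity complex of $\mathcal N$ along $i^{-1}(Z)$, so an induction on $\dim X$ lets us assume $\mathcal M$ has full support $X$, i.e.\ is an extension across $H:=\Sing(\mathcal M)$ of an integrable connection $\mathcal E$ on $X\setminus H$, with $H$ a hypersurface. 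Since $\Irr_Z(\mathcal M)$ is supported on $Z\cap H$, only the hypersurfaces $Z$ meeting $H$ play a role.

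Next I would invoke the structure theorem of Kedlaya and Mochizuki: there is a projective modification $\pi\colon X'\to X$, an isomorphism over $X\setminus H$, such that $H':=\pi^{-1}(H)_{\mathrm{red}}$ is a normal crossing divisor and, after a fixed ramified covering adapted to $H'$, the pullback $\pi^{+}\mathcal E$ has good formal structure along $H'$: locally it is a direct sum $\bigoplus_i\mathcal E^{\varphi_i}\otimes\mathcal R_i$ with $\varphi_i\in\mathcal O_{X'}(*H')/\mathcal O_{X'}$ and $\mathcal R_i$ regular along $H'$. Let $M$ be the largest pole order occurring among the finitely many $\varphi_i$ along the components of $H'$; it depends only on $\mathcal E$ and on the chosen $\pi$, not on $Z$. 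The substantial step is then a \emph{uniform local estimate}: for a meromorphic connection with good formal structure along a normal crossing divisor $H'$, and for an \emph{arbitrary} hypersurface $Z'$ of $X'$ — which may be arbitrarily tangent to $H'$, so that one cannot afford to blow $Z'\cup H'$ up into normal crossings without inflating $M$ — the analytic slopes along $Z'$ are bounded by an explicit function of $M$ and of the number of components of $H'$. I expect this to come out of a direct computation of the irregularity of $\bigoplus_i\mathcal E^{\varphi_i}\otimes\mathcal R_i$ along $Z'=\{g=0\}$, controlling how the Laurent expansions of the $\varphi_i$ interact with the vanishing of $g$; this local analysis is, in my view, the technical heart of the argument.

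Granting such an estimate, what remains is to descend from $X'$ to $X$, and this is the principal obstacle. As $\pi$ is proper and an isomorphism over $X\setminus H$, the module $\mathcal M$ is recovered — up to subquotients supported on $H$, which do not affect the analytic slopes — from $\pi^{+}\mathcal M$ by proper direct image. One would therefore need a comparison of $\Irr_Z(\mathcal M)$ with $\mathbf{R}\pi_*\,\Irr_{\pi^{-1}(Z)}(\pi^{+}\mathcal M)$ compatible with the $\mathds{R}_{>1}$-filtrations up to a reindexing bounded independently of $Z$: a quantitative compatibility of the irregularity complex with proper pushforward. This is not available in the form required — it is exactly the functoriality defect highlighted in the introduction — which is why one is led to replace the analytic slopes by the twisted nearby cycles of the paper, whose formation does commute with $\mathbf{R}\pi_*$ by proper base change. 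The workable plan is thus to prove the boundedness statement for the resulting nearby slopes, where the descent goes through, and to recover Conjecture~\ref{conj0} from a comparison between analytic and nearby slopes; closing that comparison in full generality is the remaining gap.
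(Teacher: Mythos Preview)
Your assessment is essentially accurate: the statement you are addressing is labelled a \emph{conjecture} in the paper and is not proved there. The paper's main result (Theorem~\ref{theoremprincipal}) is precisely the nearby-slopes analogue you arrive at in your final paragraph, and the comparison between analytic slopes and nearby slopes that you identify as the remaining gap is explicitly left open (``It is an interesting problem to try to compare nearby slopes and analytic slopes. This question won't be discussed in this paper''). So your proposal is not a proof of the conjecture, nor does the paper contain one; you have correctly located both the obstruction (descent of the slope \emph{filtration} on $\Irr_Z$ under proper pushforward) and the workaround (pass to twisted nearby cycles, where compatibility with proper direct image is available).

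One point where your diagnosis diverges from what the paper actually does for the nearby-slopes version: you write that for the local estimate ``one cannot afford to blow $Z'\cup H'$ up into normal crossings without inflating $M$'', and expect instead a direct computation along an arbitrarily tangent $Z'$. In fact the paper \emph{does} blow up. The key step (Proposition~\ref{propositionprincipale}) is that a carefully chosen embedded resolution of $Z$ relative to the normal-crossing pole divisor, taken from Bierstone--Milman, controls the growth of the generic slopes of $\pi^{+}\mathcal M$ along each exceptional component by the corresponding multiplicity of $\pi^{\ast}Z$, uniformly in $Z$; this feeds into the vanishing criterion \ref{lemmeannulation} and produces the explicit bound $r_1(\mathcal M)+\cdots+r_n(\mathcal M)$ of Theorem~\ref{theoremprincipalraffiné}. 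The subtlety is not that blowing up is forbidden, but that the resolution must be chosen so that the inflation of the pole orders of the $\varphi_i$ is absorbed by the inflation of the multiplicities of $f$.
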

This statement means that for a holonomic $\mathcal{D}_X$-module $\mathcal{M}$, one can find for every point in  $X$ a neighbourhood
$U$ and a constant $C>0$ such that the analytic slopes of $\mathcal{M}$ along any germ of hypersurface in $U$ are $\leq C$.\\ \indent
On the other hand, Laurent defined \textit{algebraic slopes} using his theory of micro-characteristic varieties \cite{ LaurentPolygone}. From Laurent and Mebkhout work \cite{LM}, we know that the set of analytic slopes of a holonomic  $\mathcal{D}$-module $\mathcal{M}$ along $Z $ is equal to the set of algebraic slopes of $\mathcal{M}$ along $Z$. Since micro-characteristic varieties are invariant by duality, we deduce that analytic slopes are invariant by duality.\\ \indent

The aim of this paper is to define a third notion of slopes and to investigate some of its properties. The main idea lies in the observation that for a germ $\mathcal{M}$ of $\mathcal{D}_{\mathds{C}}$-module at $0\in \mathds{C}$, \textit{the slopes of  $\mathcal{M}$ at 0 are encoded in the vanishing of certain nearby cycles}. We show in \ref{propdim1} that $r\in \mathds{Q}_{\geq 0}$ is a slope for $\mathcal{M}$ at 0 if and only if one can find a germ $N$ of meromorphic connection at 0 with slope $r$ such that $\psi_{0}(\mathcal{M}\otimes N)\neq 0$.\\ \indent
We thus introduce the following definition. Let $X$ be a complex manifold and let $\mathcal{M}$ be an object of the derived category $\mathcal{D}_{\hol}^b(X)$ of complexes of  $\mathcal{D}_X$-modules with bounded and holonomic cohomology. Let $f\in \mathcal{O}_X$. We denote by $\psi_f$ the nearby cycle functor\footnote{For general references on the nearby cycle functor, let us mention \cite{Kashpsi},\cite{Malpsi},\cite{MS} and \cite{MM}.} associated to $f$. We define the \textit{nearby slopes of $\mathcal{M}$ associated to $f$} to be the set $\Sl_{f}^{\nb}(\mathcal{M})$ complement in $\mathds{Q}_{\geq 0}$ of the set of rationals $r\geq 0$ such that for every germ $N$ of meromorphic connection  at 0 with slope $r$, we have 
\begin{equation}\label{annulation0}
\psi_{f}(\mathcal{M}\otimes f^{+}N)\simeq 0
\end{equation}
Let us observe that the left-hand side of \eqref{annulation0} depends on $N$ only via $\hat{\mathcal{O}}_{\mathds{C},0}\otimes_{\mathcal{O}_{\mathds{C},0}} N$, and that nearby slopes are sensitive to the reduced structure of $\divi f$, whereas the analytic and algebraic slopes only see the support of $\divi f$. \\ \indent
Twisted nearby cycles appear for the first time in the algebraic context in \cite{DeligneLettreMalgrange}. Deligne proves in \textit{loc. it.}  that for a given function $f$, the set of $\hat{\mathcal{O}}_{\mathds{C},0}$-differential modules $N$ such that 
$\psi_{f}(\mathcal{M}\otimes f^{+}N)\neq 0$ is finite.\\ \indent
The main result of this paper is an affirmative answer to  conjecture \ref{conj0} for nearby slopes, that is the 
\begin{theorem}\label{theoremprincipal}
Locally on $X$, the set of nearby slopes of a holonomic 
$\mathcal{D}$-module is bounded.
\end{theorem}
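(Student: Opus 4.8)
The plan is to reduce the boundedness of nearby slopes to a known finiteness/boundedness statement, most plausibly Deligne's finiteness result from \cite{DeligneLettreMalgrange} combined with a suitable uniformity in the function $f$, or alternatively to Laurent's algebraic slopes via the comparison with analytic slopes of Laurent–Mebkhout \cite{LM}. First I would establish that the question is local and may be analyzed after restriction along the various strata of a stratification adapted to $\mathcal{M}$; by the characterization \ref{propdim1} in dimension one, a nearby slope $r$ for $\mathcal{M}$ associated to $f$ manifests itself, after a suitable cutting-down to a curve transverse to $\divi f$, as an honest slope of a one-dimensional meromorphic connection. So the strategy is: (i) choose a Whitney stratification of $X$ such that the de Rham complex of $\mathcal{M}$ is constructible, refined so that $\Supp \mathcal{M}$ and the relevant characteristic varieties are controlled; (ii) use the standard fact that $\psi_f(\mathcal{M}\otimes f^+N)$ can be computed via restriction to generic transversal curves, reducing to the case $X=\mathds{C}$; (iii) in the one-dimensional case, invoke Deligne's result that only finitely many $N$ (up to the relevant equivalence) give nonzero twisted nearby cycles — hence finitely many slopes — and make the bound uniform in $f$ by a Noetherianity/constructibility argument on the parameter space of functions $f$.

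The key technical steps, in order: first, prove a \emph{stability} lemma — the nearby slopes of $\mathcal{M}$ associated to $f$ are controlled by the characteristic variety $\Char(\mathcal{M})$ and the geometry of $\divi f$ relative to it; this is where I would want an estimate of the form ``if $r$ is a nearby slope then the graph of $df/f$ meets $\Char(\mathcal{M})$ in a prescribed way''. Second, using Kashiwara's estimate for the characteristic variety of a twist $\mathcal{M}\otimes f^+N$ in terms of $\Char(\mathcal{M})$ and the slope of $N$, show that once $r$ exceeds a constant $C$ depending only on $\Char(\mathcal{M})$ and on bounds for the vanishing order of $f$ along the strata, the twisted module $\mathcal{M}\otimes f^+N$ becomes ``non-characteristic for $f$'' in the relevant microlocal sense, forcing $\psi_f(\mathcal{M}\otimes f^+N)\simeq 0$. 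Third, conclude: the constant $C$ is uniform over all germs of hypersurfaces $\divi f$ in a fixed neighbourhood because $\Char(\mathcal{M})$ is a fixed conic Lagrangian and the interaction of an arbitrary hypersurface with a fixed conic Lagrangian admits a uniform bound (a consequence of the finiteness of the stratification and of semicontinuity of intersection multiplicities / of Laurent's microcharacteristic polygon along families).

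The main obstacle I expect is precisely this last uniformity in $f$: Deligne's finiteness is stated for a \emph{fixed} $f$, and passing to a bound independent of the germ of hypersurface requires a genuinely new argument — controlling how the microlocal position of $\Char(\mathcal{M})$ relative to $\Graph(d\log f)$ varies as $f$ ranges over all germs. I anticipate handling this by working on a fixed ambient cotangent bundle, stratifying the space of possible ``slope-$r$ twists'' (equivalently, exponential factors $e^{g/f^{1/e}}$ with $\ord g$ constrained), and using that the relevant incidence locus in (jets of $f$) $\times$ (jets of $N$) $\times$ $\Char(\mathcal{M})$ is a constructible set whose projection to the $f$-variable has bounded fibers in the slope parameter. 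A secondary difficulty is bookkeeping the reduced-structure sensitivity flagged in the excerpt — the bound must be stated for the reduced divisor, so the cutting-down to curves must be done transversally to $(\divi f)_{\red}$, and I would need to verify that multiplicities along $\divi f$ do not secretly enter the constant. Once the microlocal estimate and the uniformity are in place, Theorem \ref{theoremprincipal} follows by taking $C$ to be the resulting constant attached to $\Char(\mathcal{M})$ on a small enough neighbourhood.
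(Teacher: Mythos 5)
Your proposal diverges from the paper's proof at every major step, and each of the three pillars it rests on has a genuine gap. First, the reduction to curves: there is no general statement letting you compute, or even detect the vanishing of, $\psi_{f}(\mathcal{M}\otimes f^{+}N)$ by restricting to generic transversal curves; and even where restriction or pullback is available, pulling $\mathcal{M}$ back along a curve or a resolution \emph{increases its slopes in an uncontrolled way}. This is precisely the difficulty the paper isolates, and it is overcome not by genericity but by a quantitatively controlled embedded resolution (a Bierstone--Milman resolution of $\divi f$ adapted to the divisor $S$ of highest generic slopes, for which one proves $v_{E}(S)\leq (\deg S)\,v_{E}(f)$ for every exceptional component $E$). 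Second, the microlocal step rests on a false premise: the characteristic variety of $\mathcal{M}\otimes f^{+}N$ does not see the slope of $N$ at all (e.g.\ $\mathcal{E}^{1/x}$ and $\mathcal{E}^{1/x^{10}}$ have the same characteristic variety), so no constant depending only on the characteristic variety of $\mathcal{M}$ can force anything once $r$ is large; detecting slopes microlocally requires Laurent's micro-characteristic varieties, which your argument neither invokes nor develops, and in any case non-characteristicity of $f$ would not give the vanishing of $\psi_f$.

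Third --- and you correctly identify this as the crux --- the uniformity in $f$ cannot be obtained by Noetherianity or constructibility on a ``space of germs $f$'': germs of holomorphic functions at a point do not form a finite-dimensional constructible family, so there is no incidence locus in ``jets of $f$'' to stratify. The paper obtains uniformity by a different mechanism: a dévissage (induction on $\dim X$ and $\dim\Supp\mathcal{M}$, using resolution of the support, Kashiwara's theorem and the local cohomology triangle) reduces to meromorphic connections; Kedlaya--Mochizuki reduces further to good formal structure; and then the bound is an invariant of $\mathcal{M}$ alone, namely $r_{1}(\mathcal{M})+\cdots+r_{n}(\mathcal{M})$, established via the multiplicity estimate above, the Sabbah--Mochizuki comparison of formal models at a point with generic models along the divisor, and an explicit vanishing criterion proved by a Kashiwara--Malgrange $V$-filtration computation (showing $s\in V_{-1}s$ for a generating section). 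Deligne's finiteness result plays no role in the proof. As it stands, your outline does not contain a workable route to the theorem.
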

This statement means that for a holonomic $\mathcal{D}_X$-module $\mathcal{M}$, one can find for every point in  $X$ a neighbourhood
$U$ and a constant $C>0$ such that the nearby slopes of  $\mathcal{M}$ associated to any $f\in \mathcal{O}_U$ are $\leq C$.\\ \indent
For meromorphic connections with good formal structure, we show the following refinement
\begin{theorem}\label{theoremprincipalraffiné}
Let $\mathcal{M}$ be a meromorphic connection with good formal structure. Let $D$ be the pole locus of $\mathcal{M}$ and let $D_1, \dots, D_n$ be the irreducible components of $D$. We denote by $r_{i}(\mathcal{M})\in \mathds{Q}_{\geq 0}$
the highest generic slope of $\mathcal{M}$ along $D_i$. Then, the nearby slopes of $\mathcal{M}$ are  $\leq r_{1}(\mathcal{M})+\cdots +r_{n}(\mathcal{M})$.
\end{theorem}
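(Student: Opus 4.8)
\emph{Proof strategy.} The nearby slopes of $\mathcal{M}$ form the union over $f\in\mathcal{O}_X$ of the sets $\Sl_f^{\nb}(\mathcal{M})$, so the plan is to show: if $r\in\mathds{Q}_{\geq 0}$ satisfies $r>r_1(\mathcal{M})+\cdots+r_n(\mathcal{M})$, then $\psi_f(\mathcal{M}\otimes f^{+}N)\simeq 0$ for every $f\in\mathcal{O}_X$ and every germ $N$ of meromorphic connection at $0$ of slope $r$. One first reduces to $N=\mathcal{E}^{\eta}$ of rank one: if $\psi_f(\mathcal{M}\otimes f^{+}N)\neq 0$, then some exponential factor $\eta$ of $N$ (a priori defined on a ramified cover of a disc around $0$) already yields $\psi_f(\mathcal{M}\otimes f^{+}\mathcal{E}^{\eta})\neq 0$, and such an $\eta$ has pole order $r$ at $0$. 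Now $\mathcal{M}\otimes f^{+}\mathcal{E}^{\eta}$ is a meromorphic connection with poles along $D\cup\{f=0\}$, so its nearby cycles along $f$ are supported on $\{f=0\}$; near a point of $\{f=0\}$ lying outside $D$ the connection $\mathcal{M}$ is $\mathcal{O}_X$-coherent, and the projection formula identifies $\psi_f(\mathcal{M}\otimes f^{+}\mathcal{E}^{\eta})$ there with $\mathcal{M}|_{\{f=0\}}\otimes\psi_f(\mathcal{E}^{\eta\circ f})$, which vanishes because $\eta\circ f$ has a pole along every component of $\{f=0\}$. Hence everything is concentrated near $D\cap\{f=0\}$.

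Next I would bring the situation into normal crossing position. Choose a projective modification $\pi\colon X'\to X$, obtained by blowing up smooth centres contained in $D$ and in the successive strict transforms of $\{f=0\}$, so that $\pi^{-1}(D)\cup\{f\circ\pi=0\}$ is a normal crossing divisor and $\pi^{+}\mathcal{M}$ still has good formal structure along $\pi^{-1}(D)$ (using that good formal structure can be achieved, and kept, after a projective modification of the pole divisor). Since $\pi$ is an isomorphism over $X\setminus(D\cup\{f=0\})$ and $\mathcal{M}\otimes f^{+}\mathcal{E}^{\eta}$ is localised there, one has $\pi_{+}\pi^{+}(\mathcal{M}\otimes f^{+}\mathcal{E}^{\eta})\simeq\mathcal{M}\otimes f^{+}\mathcal{E}^{\eta}$, and as nearby cycles commute with proper pushforward, $\psi_f(\mathcal{M}\otimes f^{+}\mathcal{E}^{\eta})\simeq\pi_{+}\psi_{f\circ\pi}\bigl(\pi^{+}\mathcal{M}\otimes(f\circ\pi)^{+}\mathcal{E}^{\eta}\bigr)$; so it suffices to prove the vanishing on $X'$. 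A further ramified cover along $\pi^{-1}(D)$ (harmless for the same reason, and realising $\pi^+\mathcal{M}$ as a direct summand of the pushforward of its pullback) lets one invoke the good formal decomposition: $\pi^{+}\mathcal{M}\otimes(f\circ\pi)^{+}\mathcal{E}^{\eta}$ becomes a successive extension of summands $\mathcal{E}^{\varphi\circ\pi+\eta\circ f\circ\pi}\otimes R$ with $R$ regular and $\varphi$ an exponential factor of $\mathcal{M}$. The slope of a good formal connection is constant along each component of its pole divisor, and the nearby cycles along $f\circ\pi$ of such a connection vanish as soon as each of its exponential factors has a pole along every component of $\{f\circ\pi=0\}$. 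It therefore remains to check that $\varphi\circ\pi+\eta\circ f\circ\pi$ has a pole along every such component $Z$.

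This pole estimate is the arithmetic core. Write $m_i(Z)=\ord_Z(\pi^{*}z_i)\geq 0$, where $z_i$ is a local equation of $D_i$. Goodness bounds the pole order of $\varphi$ along $D_i$ by $r_i(\mathcal{M})$, so $\ord_Z(\varphi\circ\pi)\geq -\sum_i r_i(\mathcal{M})\,m_i(Z)$; since $\eta$ has pole order exactly $r$, the function $\eta\circ f\circ\pi$ has pole order exactly $r\,\ord_Z(f\circ\pi)$ along $Z$, and so does each term of its polar expansion. Consequently, if
\[
r\,\ord_Z(f\circ\pi)\;>\;\sum_i r_i(\mathcal{M})\,m_i(Z),
\]
the leading pole of $\eta\circ f\circ\pi$ along $Z$ survives in $\varphi\circ\pi+\eta\circ f\circ\pi$, which then has a pole along $Z$. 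This inequality follows from the hypothesis $r>\sum_i r_i(\mathcal{M})$ provided one knows that $\ord_Z(f\circ\pi)\geq\max_i m_i(Z)$ for every component $Z$ of $\{f\circ\pi=0\}$, because then
\[
r\,\ord_Z(f\circ\pi)\;\geq\;r\max_i m_i(Z)\;>\;\Bigl(\sum_i r_i(\mathcal{M})\Bigr)\max_i m_i(Z)\;\geq\;\sum_i r_i(\mathcal{M})\,m_i(Z).
\]
It is exactly here that the \emph{sum} $r_1(\mathcal{M})+\cdots+r_n(\mathcal{M})$, and not the maximum, is forced: along an exceptional divisor lying over a deep stratum $D_{i_1}\cap\cdots\cap D_{i_k}$ the pole of $\varphi\circ\pi$ may accumulate all of $r_{i_1}(\mathcal{M})+\cdots+r_{i_k}(\mathcal{M})$, whereas $f\circ\pi$ can vanish there only to the minimal order compatible with $Z\subseteq\{f\circ\pi=0\}$. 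I expect the main difficulty to be precisely the existence of a modification $\pi$ as above for which $\ord_Z(f\circ\pi)\geq\max_i m_i(Z)$ holds along every exceptional divisor met by $\{f\circ\pi=0\}$: this is a principalisation-type statement for the ideal of $\{f=0\}$ relative to $D$, whose verification amounts to keeping track of how the vanishing orders of $f$ and of the $z_i$ increase in tandem under each blow-up. At points where $D\cup\{f=0\}$ is already normal crossing no modification is needed and the mechanism is transparent: there $Z$ is either some $D_i$, with $\ord_Z(f\circ\pi)=\ord_{D_i}(f)\geq 1=m_i(Z)$, or a component not contained in $D$, for which all $m_i(Z)=0$.
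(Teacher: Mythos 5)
Your architecture matches the paper's: reduce to rank-one twists, resolve $\{f=0\}$ relative to the pole divisor with control on multiplicities, prove an arithmetic domination of $\varphi\circ\pi$ by $\eta\circ f\circ\pi$ along each component of $\divi(f\circ\pi)$, and conclude by a vanishing criterion for exponentially twisted nearby cycles. But the two pillars of that architecture are exactly the paper's two technical results, and you prove neither. First, the vanishing criterion you invoke --- ``nearby cycles along $f\circ\pi$ vanish as soon as each exponential factor has a pole along every component of $\{f\circ\pi=0\}$'' --- is not a standard fact you can cite; it is the heart of the proof. The combined factor $\varphi\circ\pi+\eta\circ f\circ\pi$ is in general \emph{not} a good form (write it as $g/x^{b}+h/x^{a}$ with $g,h$ units: the numerator of the sum can vanish on the divisor), so you cannot reduce to the easy case of a single good exponential with poles along $\divi f$ by a change of variables. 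The paper proves the needed vanishing (its Proposition 2.4 and Lemmas 2.5, 2.7) under the \emph{strict componentwise domination} $b_i<a_i$ along $\Supp a$ --- not merely ``the sum has a pole'' --- by an explicit Kashiwara--Malgrange $V$-filtration computation, which moreover has to handle poles of $\varphi$ along components of $D$ that do \emph{not} lie in $\divi f$ (that is where the real work is). Your pole estimate does in fact deliver the strict domination, so the right hypothesis is in your hands, but the implication ``domination $\Rightarrow$ vanishing'' is asserted, not established.

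Second, the controlled resolution. You correctly identify that one needs a modification for which the multiplicity of $f\circ\pi$ along every exceptional component keeps pace with the multiplicities of the pulled-back components of $D$, and you correctly diagnose why the bound must be the \emph{sum} $r_1+\cdots+r_n$ rather than the maximum. But you leave the existence of such a $\pi$ as ``the main difficulty.'' The paper resolves it by taking the Bierstone--Milman resolution of $|Z|$ relative to the normal crossing divisor $|S|$ (centers smooth, nowhere dense in the strict transform of $|Z|$, having normal crossings with the accumulated divisor) and proving, by induction on the number of blow-ups, the inequality $v_E(S)\le(\deg S)\,v_E(f)$: at each blow-up the center lies in the strict transform of $|Z|$, so $v_P(f)$ picks up at least one full multiplicity $a_i\ge 1$, while each component of $S$ contributes only $\epsilon\in\{0,1\}$. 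Your proposed inequality $v_E(f\circ\pi)\ge\max_i m_i(E)$ is stronger than what is needed and would require the same induction. So the proposal is a correct and well-motivated outline, but both of its load-bearing steps remain unproved, and one of them is stated in a form weaker than what the argument actually requires.
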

The main tool used in the proof of theorem \ref{theoremprincipal}  is a structure theorem for formal meromorphic connections first conjectured in \cite{CastroSabbah}, studied by Sabbah \cite{Sabbahdim} and proved by Kedlaya \cite{Kedlaya1}\cite{Kedlaya2} in the context of excellent schemes and analytic spaces, and independently by  Mochizuki \cite{Mochizuki2}\cite{Mochizuki1} in the algebraic context.\\ \indent
Let us give some details on the strategy of the proof of theorem \ref{theoremprincipal}. A dévissage carried out in  \ref{reduction} allows one to suppose that $\mathcal{M}$  is a meromorphic connection. Using Kedlaya-Mochizuki theorem, one reduces further the proof to the case where 
 $\mathcal{M}$ has good formal structure. We are thus left to prove theorem \ref{theoremprincipalraffiné}. We resolve the singularities of $Z$. The problem that occurs at this step is that a randomly chosen embedded resolution $p:\tilde{X}\longrightarrow X$ will increase the generic slopes of $\mathcal{M}$ in a way that cannot be controlled. We show in \ref{propositionprincipale}  that a fine version of embedded resolution \cite{BMUniformization} allows  to control the generic slopes of $p^{+}\mathcal{M}$  in terms of the sum $ r_{1}(\mathcal{M})+\cdots +r_{n}(\mathcal{M})$ and  the multiplicities of $p^{\ast}Z$. A crucial tool for this is a theorem
 \cite[I 2.4.3]{Sabbahdim} proved by Sabbah in dimension 2 and by Mochizuki \cite[2.19]{MochStokes} in any dimension relating the good formal models appearing at a given point  with the generic models on the divisor locus. Using a vanishing criterion \ref{lemmeannulation}, one finally proves \eqref{annulation0} for $r> r_{1}(\mathcal{M})+\cdots +r_{n}(\mathcal{M})$.\\ \indent
Let $\mathcal{M}\in \mathcal{D}_{\hol}^b(X)$ and let us denote by $\mathds{D}\mathcal{M}$ the dual complex of $\mathcal{M}$. The nearby slopes satisfy the following functorialities
\begin{theorem}\label{theoremprincipal-1}
$(i)$ For every $f\in \mathcal{O}_X$, we have $$\Sl_{f}^{\nb}(\mathds{D}\mathcal{M})=\Sl_{f}^{\nb}(\mathcal{M})$$
$(ii)$ Let $p:X\longrightarrow Y$ be a proper morphism and let $f\in \mathcal{O}_Y$ such that $p(X) $ is not contained in $f^{-1}(0)$. Then
$$\Sl_{f}^{\nb}(p_{+}\mathcal{M})\subset \Sl_{fp}^{\nb}(\mathcal{M})$$
\end{theorem}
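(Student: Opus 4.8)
Both assertions should follow formally from two standard compatibilities of the nearby cycle functor --- its commutation, up to shift, with duality and with proper push-forward --- once one observes that tensoring a holonomic complex by a meromorphic connection $f^{+}N$ along $D:=f^{-1}(0)$ destroys everything supported on $D$, so that this twist only sees the localization $(\ast D)$ and interacts transparently with the above operations.

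\textbf{Assertion $(i)$.} Fix a germ $N$ of meromorphic connection at $0$, with dual $N^{\vee}$. Since $f^{+}N$ is locally free over $\mathcal{O}_{X}(\ast D)$ and $(f^{+}N)^{\vee}=f^{+}(N^{\vee})$, duality commutes with the twist by $f^{+}N$, and $\mathcal{N}\otimes f^{+}N$ depends on $\mathcal{N}$ only through $\mathcal{N}(\ast D)$. Combining this with $\mathds{D}\circ(\ast D)\simeq(!\, D)\circ\mathds{D}$ and $\psi_{f}\circ\mathds{D}\simeq\mathds{D}\circ\psi_{f}$, I would produce a natural isomorphism, up to shift,
$$
\psi_{f}\big(\mathds{D}\mathcal{M}\otimes f^{+}N\big)\;\simeq\;\mathds{D}\,\psi_{f}\big(\mathcal{M}\otimes f^{+}(N^{\vee})\big),
$$
so that $\psi_{f}(\mathds{D}\mathcal{M}\otimes f^{+}N)\simeq 0$ if and only if $\psi_{f}(\mathcal{M}\otimes f^{+}(N^{\vee}))\simeq 0$. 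As in dimension one the slope of a meromorphic connection is invariant under duality and $N\mapsto N^{\vee}$ is an involution compatible with formal completion, $N$ runs through the germs of meromorphic connections of slope $r$ exactly when $N^{\vee}$ does; hence the left-hand side vanishes for all $N$ of slope $r$ precisely when $\psi_{f}(\mathcal{M}\otimes f^{+}N')\simeq 0$ for all $N'$ of slope $r$, which is the desired equality $\Sl_{f}^{\nb}(\mathds{D}\mathcal{M})=\Sl_{f}^{\nb}(\mathcal{M})$.

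\textbf{Assertion $(ii)$.} The hypothesis $p(X)\not\subset f^{-1}(0)$ only ensures that $fp\in\mathcal{O}_{X}$ does not vanish identically, so that $\Sl_{fp}^{\nb}(\mathcal{M})$ makes sense. Fix $N$ of slope $r$. The projection formula for the proper morphism $p$ applied to the meromorphic connection $f^{+}N$ on $Y$, together with $p^{+}(f^{+}N)=(fp)^{+}N$, gives $p_{+}\mathcal{M}\otimes f^{+}N\simeq p_{+}(\mathcal{M}\otimes(fp)^{+}N)$; applying $\psi_{f}$ and using the commutation of nearby cycles with proper push-forward, $\psi_{f}\circ p_{+}\simeq p_{+}\circ\psi_{fp}$, I get
$$
\psi_{f}\big(p_{+}\mathcal{M}\otimes f^{+}N\big)\;\simeq\;p_{+}\,\psi_{fp}\big(\mathcal{M}\otimes(fp)^{+}N\big).
$$
Thus $\psi_{fp}(\mathcal{M}\otimes(fp)^{+}N)\simeq 0$ forces $\psi_{f}(p_{+}\mathcal{M}\otimes f^{+}N)\simeq 0$; letting $N$ range over all meromorphic connections of a fixed slope $r$ shows that $r\notin\Sl_{fp}^{\nb}(\mathcal{M})$ implies $r\notin\Sl_{f}^{\nb}(p_{+}\mathcal{M})$, i.e.\ $\Sl_{f}^{\nb}(p_{+}\mathcal{M})\subset\Sl_{fp}^{\nb}(\mathcal{M})$.

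\textbf{The main difficulty.} The slope bookkeeping above is formal; the point requiring real care is that the compatibilities invoked --- duality and twist with $f^{+}N$, the projection formula, and the commutations of $\psi_{f}$ with $\mathds{D}$ and with $p_{+}$ --- must be available for possibly irregular holonomic $\mathcal{D}$-modules and for the \emph{meromorphic}, hence non $\mathcal{O}$-coherent, connections $f^{+}N$ and $(fp)^{+}N$, not merely in the regular or flat-bundle setting where they are classical. I would establish them by first localising along $f^{-1}(0)$, resp.\ $p^{-1}(f^{-1}(0))$ --- which, by the observation in the first paragraph, changes none of the objects entering the definition of the nearby slopes --- thereby reducing to locally free modules over the relevant sheaf of meromorphic functions, and then appealing to the known form of these statements in the holonomic $\mathcal{D}$-module formalism.
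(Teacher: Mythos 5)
Your proposal is correct and follows essentially the same route as the paper: for $(i)$, the paper also reduces the comparison $\mathds{D}(\mathcal{M}\otimes f^{+}N)\simeq \mathds{D}\mathcal{M}\otimes f^{+}N^{\ast}$ to a cone-supported-on-$Z$ argument plus localization, then uses that $\psi_f$ is insensitive to localization, commutes with $\mathds{D}$, and that meromorphic duality is a slope-preserving involution; for $(ii)$, the paper likewise invokes the projection formula together with the compatibility of $\psi$ with proper direct image. No substantive difference to report.
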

Let us observe that  $(ii)$  is a direct application of the compatibility of nearby cycles with proper direct image \cite{MS}.\\ \indent
It is an interesting problem to try to compare nearby slopes and analytic slopes. This question won't be discussed in this paper, but we characterize regular holonomic $\mathcal{D}$-modules using nearby slopes.
\begin{theorem}\label{comparaisonreg}
A complex $\mathcal{M}\in \mathcal{D}_{\hol}^b(X)$ is regular if and only if for every quasi-finite morphism $\rho: Y\longrightarrow X$ with $Y$ a complex manifold, the set of nearby slopes of $\rho^{+}\mathcal{M}$ is contained in $\{ 0\}$.
\end{theorem}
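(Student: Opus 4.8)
The plan is to prove both implications by reducing to the curve case via restriction, using the filtration by irregularity from Mebkhout's theory and the characterization of slopes in dimension one recorded in \ref{propdim1}. For the direct implication, suppose $\mathcal{M}$ is regular. Regularity is preserved by inverse image under a quasi-finite morphism $\rho: Y\longrightarrow X$ (this is one of the standard stability properties of the category of regular holonomic complexes), so $\rho^{+}\mathcal{M}$ is again regular; hence it suffices to show that a regular $\mathcal{M}$ has all nearby slopes equal to $0$. For this, fix $f\in \mathcal{O}_X$ and $r>0$, and a germ $N$ of meromorphic connection at $0$ with slope $r$. Since $N$ is irregular and $\mathcal{M}$ is regular, $\mathcal{M}\otimes f^{+}N$ is a complex whose irregularity is entirely "created by $N$"; the point is that $\psi_f(\mathcal{M}\otimes f^{+}N)$ must vanish. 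I would deduce this from \ref{lemmeannulation} (the vanishing criterion), after the usual dévissage of \ref{reduction} reducing $\mathcal{M}$ to a meromorphic connection, combined with the fact that a regular meromorphic connection becomes, after an embedded resolution, a connection with good formal structure all of whose generic slopes vanish, so that Theorem \ref{theoremprincipalraffiné} gives nearby slopes $\leq 0$, i.e. nearby slopes $\subset\{0\}$. That $f^{+}N$ contributes no cancellation-free nearby cycle is where the twist by a purely irregular connection of slope $r>0$ against a regular object forces \eqref{annulation0}.

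For the converse, I argue by contraposition: assume $\mathcal{M}$ is not regular and produce a quasi-finite $\rho: Y\longrightarrow X$ with $\rho^{+}\mathcal{M}$ having a nonzero nearby slope. By the standard characterization of regularity (restriction to curves, e.g. via the curve-testing criterion for regular holonomic $\mathcal{D}$-modules), irregularity of $\mathcal{M}$ is detected on some curve: there is a locally closed curve $C\hookrightarrow X$, or more precisely a quasi-finite morphism from a curve, along which the restriction of $\mathcal{M}$ has a strictly positive slope $r_0$ at some point. Pulling back to a suitable $Y$ (a curve, or a neighbourhood dominating $C$) and choosing a local coordinate $f$ vanishing at that point, \ref{propdim1} provides a germ $N$ of meromorphic connection of slope $r_0$ with $\psi_{f}(\rho^{+}\mathcal{M}\otimes f^{+}N)\neq 0$. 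Hence $r_0\in \Sl_f^{\nb}(\rho^{+}\mathcal{M})$ is a nonzero nearby slope, contradicting the hypothesis.

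The main obstacle is the converse direction, specifically the passage from "not regular" to "irregularity visible on an explicit curve through a quasi-finite map". One must be careful that the singular locus of $\mathcal{M}$ and its irregularity locus need not be smooth, so the curve cannot simply be taken as a coordinate axis; instead one invokes the fact (going back to the very definition of irregularity via the complex $\Irr_Z$ and its compatibility with restriction, together with Laurent--Mebkhout \cite{LM}) that if all analytic slopes along all hypersurfaces vanish then $\mathcal{M}$ is regular, and contrapositively a nonzero analytic slope along some $Z$ yields, after restricting to a generic curve transverse to the relevant stratum, a nonzero slope in the one-dimensional sense. Matching the quantifier "for every quasi-finite $\rho$" in the statement with this transversal-curve construction is the delicate bookkeeping step; once a single bad $\rho$ is exhibited, \ref{propdim1} finishes the job immediately. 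A secondary point requiring care in the direct implication is checking that the dévissage of \ref{reduction} and the resolution step of \ref{propositionprincipale} interact correctly with an arbitrary quasi-finite $\rho$, but since regularity is stable under all the operations involved, this is routine.
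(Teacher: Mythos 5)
Your direct implication matches the paper: regularity is preserved by inverse image, and the inclusion $\mathcal{D}_{\hol}^b(X)_{\reg}\subset \mathcal{D}_{\hol}^b(X)_{\{0\}}$ is proved exactly as you describe (the dévissage of \ref{reduction}, embedded desingularization, then the vanishing criterion \ref{lemmeannulation}); this is Proposition \ref{premierinclusion}. The gap is in the converse. You pass from ``$\mathcal{M}$ not regular'' to ``some curve, quasi-finitely mapped to $X$, on which the restriction has a positive slope'' by invoking a curve-testing criterion for regularity together with the claim that a nonzero analytic slope along a hypersurface $Z$ survives restriction to a generic transversal curve. Neither is available as a black box here: a curve test in the analytic category is of essentially the same nature and depth as the statement being proved (the theorem is itself a test-object characterization of regularity), and the bridge from a nonzero graded piece of Mebkhout's filtration on $\Irr_Z(\mathcal{M})$ to a positive Newton-polygon slope of $i_C^{+}\mathcal{M}$ for a well-chosen curve $C$ would require a non-characteristic restriction statement for the irregularity filtration that you neither formulate nor prove; near turning points the curves you need may be characteristic. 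As written, the converse rests on an assertion at least as strong as the one to be established.

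The paper's converse takes a genuinely different route, which also shows where the quasi-finiteness hypothesis is actually consumed. It is a double induction on $\dim X$ and $\dim\Supp\mathcal{M}$. When $\Supp\mathcal{M}$ is a proper closed subset, one writes $X=X^{\prime}\times D$ locally with the projection $p$ finite on $\Supp\mathcal{M}$, checks by base change and the projection formula that $p_{+}\mathcal{M}$ inherits the hypothesis (this uses the hypothesis for the quasi-finite base changes $\rho^{\prime}$), concludes that $p_{+}\mathcal{M}$ is regular by induction on $\dim X$, and transports regularity back via the compatibility of $\Irr^{\ast}_{Z}$ with proper direct images and a hypersurface chosen as in \ref{lemmemeb}. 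When $\dim\Supp\mathcal{M}=\dim X$, one localizes along a suitable $Z$, reduces to generic regularity along $Z$ by \cite[4.3-17]{Mehbsmf}, puts $\mathcal{M}(\ast Z)$ in good formal structure, and then --- the step your argument has no analogue of --- applies the hypothesis to the ramification $\rho:(x,u)\mapsto(x,u^{p})$ of \ref{bonnedecompositionformelle} and to the twist by $\mathcal{E}^{-g/u^{k}}$: the vanishing of $\psi_{u}(\rho^{+}(\hat{\mathcal{M}}_{0}(\ast Z))\otimes\mathcal{E}^{-g/u^{k}})$ forces every irregular factor of the formal decomposition to vanish. To repair your proof you would either have to actually prove the curve test in the form you use it, or follow an induction of this kind.
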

For an other characterization  of regularity (harder to deal with in practice) using  $R\mathcal{H}om$ and the solution functor, we refer to \cite{carmodreg}.\\ \indent
Let us give an application of the preceding results. Let $U$ be a smooth complex algebraic variety and let $\mathcal{E}$  be an algebraic connection on $U$. We denote by $H^{k}_{\dR}(U,\mathcal{E})$ the $k^{th}$ de Rham cohomology group of  $\mathcal{E}$, and by $\mathcal{V}$ the local system of horizontal sections of $\mathcal{E}^{\an}$ on  $U^{\an}$. If $\mathcal{E}$  is regular, Deligne proved  \cite{Del} that the canonical comparison morphism
\begin{equation}\label{accperiode}
H^{k}_{\dR}(U,\mathcal{E} )\longrightarrow H^{k}(U^{\an},\mathcal{V} )
\end{equation}
is an isomorphism. If $\mathcal{E}$  is the trivial connection, this is due to Grothendieck \cite{GroDR}. In the irregular case, \eqref{accperiode}  is no longer an isomorphism. It can happen that $H^{k}_{\dR}(U,\mathcal{E} )$ is non zero and $H^{k}(U^{\an},\mathcal{V})$  is zero, which means that there are not enough topological cycles in $U^{\an}$. The \textit{rapid decay homology} $H_{k}^{\rd}(U,\mathcal{E}^{\ast} )$ needed  to remedy this problem appears in dimension one in \cite{HB} and in higher dimension in \cite{Hiendimdeux}\cite{HienInv}. It includes cycles drawn on a compactification of $U^{\an}$ taking into account the asymptotic at infinity of the solutions of the dual connection  $\mathcal{E}^{\ast}$. By Hien duality theorem, we have a perfect pairing
\begin{equation}\label{accperiodeirr}
\int : H^{k}_{\dR}(U,\mathcal{E} )\times H_{k}^{\rd}(U,\mathcal{E}^{\ast} )\longrightarrow \mathds{C}
\end{equation}
For $\omega\in H^{k}_{\dR}(U,\mathcal{E})$ and $\gamma\in H_{k}^{\rd}(U,\mathcal{E}^{\ast})$, the complex number  $\int_\gamma \omega$ is a \textit{period for $\mathcal{E}$}.\\ \indent
Let $f:X\longrightarrow S$ be a proper and generically smooth morphism, where $X$ denotes an algebraic variety and $S$ denotes a neighbourhood of 0 in $\mathds{A}^{1}_{\mathds{C}}$. Let $U$ be the complement of a normal crossing divisor $D$ of $X$ such that for every $t\neq 0$ close enough to 0, $D_t$ is a normal crossing divisor of  $X_t$. Let $\mathcal{E}$  be an algebraic connection on $U$. Let us denote by $D_1, \dots, D_n$  the irreducible components of $D$ meeting $f^{-1}(0)$ and let $r_i(\mathcal{E})$ be the highest generic slope of $\mathcal{E}$ along $D_i$.\\ \indent
As an application of theorem \ref{theoremprincipalraffiné},
we prove the following
\begin{theorem}\label{GMirr}
If $\mathcal{E}$ has good formal structure along $D$ and if the fibers  $X_t$, $t\neq 0$ of $f$ are non characteristic at infinity\footnote{this is for example the case if $D$ is smooth and if the fibers of $f$ are transverse to $D$.} for  $\mathcal{E}$,  then the periods of the family $(\mathcal{E}_t)_{t\neq 0}$ are solutions of a system of linear polynomial differential equations whose slopes at $0$ are
$\leq r_1(\mathcal{E})+\cdots +r_n(\mathcal{E})$.
\end{theorem}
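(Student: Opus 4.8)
The plan is to realize the periods of the family $(\mathcal{E}_{t})_{t\neq 0}$ as solutions of a Gauss--Manin connection, namely $\mathcal{G}:=f_{+}j_{+}\mathcal{E}$ where $j\colon U\hookrightarrow X$ denotes the open immersion, and then to bound the slopes of $\mathcal{G}$ at $0\in S$ using the nearby--slope results of the previous sections. Since $f$ is proper, $\mathcal{G}\in \mathcal{D}_{\hol}^{b}(S)$, and since $D$ is a divisor, $j_{+}\mathcal{E}$ is the meromorphic extension of $\mathcal{E}$ across $D$: a meromorphic connection on $X$ with pole locus $D$, having good formal structure by hypothesis.

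The first step is to compare the periods with $\mathcal{G}$ over the punctured base $S\setminus\{0\}$. The non-characteristicity at infinity of the fibres $X_{t}$, $t\neq 0$, together with their smoothness, ensures that $\DR$ commutes with restriction to $X_{t}$ and, by properness of $f$, with $f_{+}$; hence $\mathcal{H}^{k}\mathcal{G}$ restricted to $S\setminus\{0\}$ is a vector bundle with flat connection whose fibre at $t$ is $H^{k}_{\dR}(U_{t},\mathcal{E}_{t})$. On the other side, Hien's rapid decay cycles can be transported over $S\setminus\{0\}$ and assemble into a local system $\mathcal{V}^{\rd}$ with fibre $H^{\rd}_{k}(U_{t},\mathcal{E}_{t}^{\ast})$; the relative form of the duality \eqref{accperiodeirr} identifies $\mathcal{V}^{\rd}$ with the sheaf of flat sections of the dual of $\mathcal{H}^{k}\mathcal{G}|_{S\setminus\{0\}}$. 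Writing the period matrix in a meromorphic frame of $\mathcal{H}^{k}\mathcal{G}$ near $0$ and in a multivalued flat basis of $\mathcal{V}^{\rd}$, one sees that it is a fundamental solution matrix of the linear system attached to the dual of $\mathcal{H}^{k}\mathcal{G}$. As slopes are invariant under duality (Theorem \ref{theoremprincipal-1}$(i)$ in the nearby setting), every system of linear differential equations satisfied by the periods has slopes at $0$ bounded above by the slopes of the complex $\mathcal{G}$ at $0$.

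By Proposition \ref{propdim1}, the slopes of $\mathcal{H}^{k}\mathcal{G}$ at $0$ are the nearby slopes $\Sl_{t}^{\nb}(\mathcal{H}^{k}\mathcal{G})$, $t$ a coordinate on $S$, and these are contained in $\Sl_{t}^{\nb}(\mathcal{G})$ because $\psi_{t}$ and the tensor product by the flat connection $t^{+}N$ are exact. I would then apply Theorem \ref{theoremprincipal-1}$(ii)$ to the proper morphism $f\colon X\to S$ and the function $t$: since $f$ is generically smooth it is non-constant, so $f(X)\not\subset t^{-1}(0)=\{0\}$, and we obtain $\Sl_{t}^{\nb}(\mathcal{G})\subset\Sl_{g}^{\nb}(j_{+}\mathcal{E})$ with $g:=t\circ f\in\mathcal{O}_{X}$ and $g^{-1}(0)=f^{-1}(0)$. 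It remains to bound $\Sl_{g}^{\nb}(j_{+}\mathcal{E})$. As $f$ is proper, after shrinking $S$ to a small disc around $0$ the irreducible components of $D$ disjoint from $f^{-1}(0)$ no longer meet the preimage of the disc; there the pole locus of $j_{+}\mathcal{E}$ is $D_{1}\cup\dots\cup D_{n}$, and the highest generic slope of $\mathcal{E}$ along $D_{i}$ is still $r_{i}(\mathcal{E})$, each $D_{i}$ being irreducible. Theorem \ref{theoremprincipalraffiné} then gives $\Sl_{g}^{\nb}(j_{+}\mathcal{E})\subset[0,r_{1}(\mathcal{E})+\dots+r_{n}(\mathcal{E})]$, and combining these inclusions completes the proof.

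The reductions and the slope bookkeeping are routine; the delicate point is the first step, namely making Hien's duality work in families over $S\setminus\{0\}$ and checking that the non-characteristicity hypothesis genuinely supplies the base-change isomorphism $\DR(j_{+}\mathcal{E})|_{X_{t}}\simeq\DR(j_{t,+}\mathcal{E}_{t})$ and its compatibility with $f_{+}$, as well as the comparison between the algebraic connection $\mathcal{E}$ and the analytic meromorphic extension to which Theorems \ref{theoremprincipalraffiné} and \ref{theoremprincipal-1} apply. One should also note that $\mathcal{H}^{k}\mathcal{G}$ may carry a piece supported at $0$, which however contributes no slope and is annihilated by $\psi_{t}$, so does not affect the bound.
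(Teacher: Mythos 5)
Your slope bookkeeping coincides with the paper's: both arguments identify the slopes of the Gauss--Manin complex at $0$ with nearby slopes via Proposition \ref{propdim1}, push the problem up to $X$ by the properness statement of Theorem \ref{theoremprincipal-1}$(ii)$, and conclude with Theorem \ref{theoremprincipalraffiné} applied to $j_{+}\mathcal{E}$ (after the standard GAGA-type identification $(f_{+}\mathcal{E})^{\an}\simeq f_{+}^{\an}(j_{+}\mathcal{E})^{\an}$, which the paper quotes from Hotta--Takeuchi--Tanisaki). That half of your argument is correct and is essentially the paper's.

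The gap is in the first step, which you yourself flag as ``the delicate point'' but then do not carry out: the assertion that the rapid decay homologies $H^{\rd}_{k}(U_{t},\mathcal{E}_{t}^{\ast})$ assemble into a local system on $S\setminus\{0\}$ identified with the flat sections of (the dual of) $\mathcal{H}^{k}\mathcal{G}$, so that the periods really are entries of a fundamental solution matrix of that system. This is exactly the content of Proposition \ref{derniereprop}, and it is not formal: the paper first uses Mochizuki's theorem $Rp_{\ast}\DR^{<D}_{\tilde{X}}\mathcal{M}\simeq \DR(\mathcal{M}(!D))$ to rewrite $\Sol(j_{+}\mathcal{E})^{\an}$ as the pushforward of the rapid decay de Rham complex of the dual connection, then runs a diagram of eight isomorphisms in which proper (topological) base change, the non-characteristicity hypothesis (to commute $\Sol$ with restriction to the fibre $X_{t}$), Poincar\'e--Verdier duality, the local duality theorem for $\mathcal{D}$-modules, GAGA, and Hien's duality theorem each play a distinct role. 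Your sketch replaces all of this by the phrase ``the relative form of the duality''; no such relative statement is available off the shelf, and proving it is most of the work. A second, smaller omission: to pass from $\mathcal{H}^{-k}\Sol(f_{+}\mathcal{E})^{\an}$ (which is what base change naturally produces) to $\Sol\mathcal{H}^{k}(f_{+}\mathcal{E})^{\an}$ (which is what governs the differential equation satisfied by the periods), the paper invokes Kashiwara's perversity theorem to degenerate the hypercohomology spectral sequence over $S^{\ast}$; this step is absent from your proposal. Neither omission indicates a wrong strategy, but as written the proof is incomplete precisely where Theorem \ref{GMirr} has its substance.
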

In the case where $\mathcal{E}$ is the trivial connection, we recover that the periods of a proper generically smooth family of algebraic varieties are solutions of a regular singular differential equation with polynomial coefficients \cite{KatzIHES}\cite{Del}. \\ \indent

The role played in this paper by nearby cycles has Verdier specialization \cite{SpeVerdier} and moderate nearby cycles \cite[XIII]{SGA7-2} as  $\ell$-adic counterparts. Let $V$ be an algebraic scheme over a perfect field $k$ of characteristic $p>0$, and let  $\overline{V}$ be  a compactification of $V$. Let $j: V\longrightarrow \overline{V}$ be the canonical inclusion, and $f\in \mathcal{O}_{\overline{V}}$. Let $S$ be the strict 
henselianization of 
$\mathcal{O}_{\mathds{A}^{1}_{k},0}$, denote by $\eta$ the generic point of $S$ and let us choose a geometric point $\overline{\eta}$ over $\eta$. Let $P$ be the wild ramification group of $\pi_1(\eta, \overline{\eta})$. Define $\overline{V}_S:=\overline{V}\times_{\mathds{A}^{1}_{k}} S$, $f_S:\overline{V}_S\longrightarrow S$ the base change of $f$ to $S$, $f_\eta: \overline{V}_\eta\longrightarrow \eta$ the restriction of $f_S$ over $\eta$ and $\iota:  \overline{V}_S\longrightarrow \overline{V}$ the canonical morphism.  Let $\mathcal{F}$ be a $\ell$-adic complex on $V$ with bounded constructible cohomology.
We say that $r\in \mathds{Q}_{\geq 0}$ is a nearby slope\footnote{Or a Verdier slope if Verdier specialization is used instead of moderate nearby cycles.} for $\mathcal{F}$  associated to $(\overline{V},f)$ if one can find a constructible $\ell$-adic sheaf $N$ on $\eta_{\et}$ with slope $r$ such that 
$$
(\psi_f(\iota^{\ast}j_{!}\mathcal{F}\otimes f_\eta^{\ast}N))^{P}\neq  0
$$
Hence, conjecture \ref{conj0} has a $\ell$-adic analogue that may be worth investigating. This is the following
\begin{question}
Is it true that the set of nearby slopes of  $\mathcal{F}$ is bounded?
\end{question}
That nearby slopes do not depend on the choice of a compactification is not clear to the author. Nor that a smoothness assumption on $V$ is needed. In any case, this leads to a notion of $\ell$-adic tame complex in the sense of Verdier specialization or moderate nearby cycles. As an analogue of the regularity of $\mathcal{O}_X$ in the theory of $\mathcal{D}$-modules, we raise the following
\begin{question}
Is it true that the constant sheaf $\overline{\mathds{Q}}_{\ell}$ on $V$ is tame in the sense of moderate nearby cycles? That is, that for every couple $(\overline{V},f)$ as above and for every constructible $\ell$-adic sheaf $N$ on $\eta_{\et}$ with slope $>0$, we have
$$
(\psi_f(\iota^{\ast}j_{!}\overline{\mathds{Q}}_{\ell}\otimes f_\eta^{\ast}N))^{P}\simeq 0   \text{ ?}
$$
\end{question}

Conjecture \ref{conj0} first appears in \cite{ProgCNRS}. This paper grew out an attempt to prove it. I thank Pierre Deligne, Zoghman Mebkhout and Claude Sabbah for valuable comments on this manuscript and Marco Hien for mentioning \cite{HR}, which inspired me a statement in the spirit of  theorem \ref{GMirr} and reignited my interest for a proof of theorem \ref{theoremprincipal}. 
This work has been achieved with the support of Freie Universität/Hebrew University of Jerusalem joint post-doctoral program and ERC 226257 program. I thank Hélène Esnault and Yakov Varshavsky for their support.

\section{Notations}
We collect here a few definitions  used all along this paper. The letter $X$ will denote a complex manifold.
\subsection{}
For a morphism $f:Y\longrightarrow X$ with $Y$ a  complex manifold, we denote by $f^{+}:D^{b}_{\hol}(\mathcal{D}_X)\longrightarrow D^{b}_{\hol}(\mathcal{D}_Y)$ and $f_{+}:D^{b}_{\hol}(\mathcal{D}_Y)\longrightarrow D^{b}_{\hol}(\mathcal{D}_X)$ the inverse image and direct image functors for  $\mathcal{D}$-modules. We note $f^{\dag}$ for $f^{+}[\dim Y-\dim X]$.
\subsection{}
Let $\mathcal{M}\in \mathcal{D}_{\hol}^b(X)$ and $f\in \mathcal{O}_X$. 
From
$
\mathcal{H}^{k}\psi_{f}(\mathcal{M}\otimes f^{+}N)\simeq 
\psi_{f}(\mathcal{H}^{k}\mathcal{M}\otimes f^{+}N)
$
for every  $k$, we deduce
\begin{equation}\label{slopecomplex}
\Sl_{f}^{\nb}(\mathcal{M})=\bigcup_{k}\Sl_{f}^{\nb}(\mathcal{H}^{k}\mathcal{M})
\end{equation}
Let us define 
$
\Sl^{\nb}(\mathcal{M}):=\bigcup_{f\in \mathcal{O}_X} \Sl_{f}^{\nb}(\mathcal{M})
$. The elements of $\Sl^{\nb}(\mathcal{M})$ are the \textit{nearby slopes}  of $\mathcal{M}$.
For  $S\subset \mathds{Q}_{\geq 0}$, we denote by $ \mathcal{D}_{\hol}^b(X)_{S}$ the full subcategory of  $\mathcal{D}_{\hol}^b(X)$ of complexes whose nearby slopes are in $S$.
\subsection{}\label{DRetSol}
Let us denote by $\DR:D^{b}_{\hol}(\mathcal{D}_X) \longrightarrow D^{b}_{c}(X,\mathds{C})$ the \textit{de Rham functor}\footnote{In this paper, we follow Hien's convention \cite{HienInv} according to which for a holonomic module $\mathcal{M}$, the complex $\DR \mathcal{M}$ is concentrated in degrees $0,\dots, \dim X$.} and by $\Sol:D^{b}_{\hol}(\mathcal{D}_X) \longrightarrow D^{b}_{c}(X,\mathds{C})$  the \textit{solution functor} for holonomic $\mathcal{D}_X$-modules. 
\subsection{}\label{localisation}
For every analytic subspace $Z$ in  $X$, we denote by $i_Z:Z\hookrightarrow X$ the canonical inclusion. The \textit{local cohomology triangle} for $Z$ and $\mathcal{M}\in \mathcal{D}_{\hol}^b(X)$ reads
\begin{equation}\label{ocholocal}
\xymatrix{
R\Gamma_{[Z]}\mathcal{M}\ar[r]&  \mathcal{M} \ar[r]& R\mathcal{M}(\ast Z)\ar[r]^-{+1}& 
}
\end{equation}
It is a distinguished triangle in  $D^{b}_{\hol}(\mathcal{D}_X)$.
The complex $R\Gamma_{[Z]}\mathcal{M}$ is  \textit{the local algebraic cohomology} of $\mathcal{M}$ along $Z$ and $R\mathcal{M}(\ast Z)$ is the \textit{localization} of $\mathcal{M}$ along $Z$. 

\subsection{}\label{bonnedecompositionformelle}
Let $\mathcal{M}$ be a germ of meromorphic connection at the origin of $\mathds{C}^{n}$. Let  $D$ be the pole  locus of  $\mathcal{M}$. For $x\in D$, we define $\hat{\mathcal{M}}_x:=\hat{\mathcal{O}}_{\mathds{C}^{n},x}\otimes_{\mathcal{O}_{\mathds{C}^{n},x}}\mathcal{M}$. We say that $\mathcal{M}$ has  \textit{good formal structure} if 
\begin{enumerate}
\item $D$ is a normal crossing divisor.
\item  For every $x\in D$, one can find coordinates $(x_1,\dots, x_n)$ centred at  $x$ with $D$ defined by $x_1\cdots x_i=0$, and an integer $p\geq 1$ such that if $\rho$ is the morphism $(x_1,\dots, x_n)\longrightarrow (x_1^p,\dots, x_i^p,x_{i+1}, \dots, x_n)$, we have a decomposition
\begin{equation}\label{decomposition}
\rho^{+}\hat{\mathcal{M}}_x\simeq \displaystyle{\bigoplus_{\varphi \in \mathcal{O}_{\mathds{C}^{n}}(\ast D)/\mathcal{O}_{\mathds{C}^{n}}}}
 \mathcal{E}^{\varphi}\otimes \mathcal{R}_{\varphi}
\end{equation}
where $\mathcal{E}^{\varphi}=(\hat{\mathcal{O}}_{\mathds{C}^{n},x}(\ast D),d+d\varphi)$ and $\mathcal{R}_{\varphi}$ is a meromorphic connection  with regular singularity along $D$.
\item\label{conditionphi}  For all $\varphi\in  \mathcal{O}_{\mathds{C}^{n}}(\ast D)/\mathcal{O}_{\mathds{C}^{n}}$ contributing to \eqref{decomposition}, we have $\divi \varphi \leq 0$.
\end{enumerate}
Let us remark that classically, one asks for condition \eqref{conditionphi} to be also true for the differences of two $\varphi$  intervening in \eqref{decomposition}. We won't impose this extra condition in this paper.
\subsection{} \label{divplusgrandepente}
Let  $\mathcal{M}$ be a meromorphic connection on  $X$ such that the pole locus $D$ of $\mathcal{M}$  has only a finite number of irreducible components $D_1,\dots, D_n$. For every $i=1,\dots, n$, we denote by $r_{D_i}(\mathcal{M})$ the highest generic slope of   $\mathcal{M}$ along $D_i$. We define the \textit{divisor of highest generic slopes of $\mathcal{M}$}  by
$$
r_{D_1}(\mathcal{M})D_1+\dots +r_{D_n}(\mathcal{M})D_n\in Z(X)_{\mathds{Q}}
$$

\section{Preliminaries on nearby cycles in the case of good formal structure}
\subsection{}
Let $n$ be an integer and take $i\in \mathds{N}^{\llbracket 1,n\rrbracket}$. The \textit{support of $i$} is the set of  $k\in \llbracket 1,n\rrbracket$ such that $i_k\neq 0$. If $E\subset \llbracket 1,n\rrbracket$, we define $i_E$ by $i_{Ek}=i_k$ for $k\in E$ and $i_{Ek}=0$ if  $k\notin E$.

\subsection{}\label{moduleelementaire}
Let $R$ be a  regular $\mathds{C}((t))$-differential module, and take $\varphi\in \mathds{C}[t^{-1}]$. For every $n\geq 1$, we define $\rho: t\longrightarrow t^{p}=x$ and
$$
\El(\rho, \varphi, R):=\rho_{+}(\mathcal{E}^{\varphi}\otimes R)
$$
If $R$ is the trivial rank 1 module, we will use the notation $\El(\rho, \varphi)$.
In general, $\El(\rho, \varphi, R)$ has slope $\ord \varphi/p$. The $\mathds{C}((x))$-modules of type $\El(\rho, \varphi, R)$ for variable $(\rho, \varphi, R)$ are called \textit{elementary modules}. From \cite[3.3]{SabbahpourLE}, we know that every $\mathds{C}((x))$-differential module can be written as a direct sum of elementary modules.
\subsection{Dimension 1}
In this paragraph, we work in a neighbourhood of the origin  $0\in \mathds{C}$. Let $x$ be a coordinate on $\mathds{C}$. Take $p\geq 1$ and define $\rho: x\longrightarrow t=x^{p}$.
\begin{proposition}\label{propdim1}
Let $\mathcal{M}$ be a germ of holonomic $\mathcal{D}$-module at the origin. Let $r>0$ be a rational number. The following conditions are equivalent
\begin{enumerate}
\item The rational $r$ is not a slope for  $\mathcal{M}$ at $0$.
\item For every germ $N$ of meromorphic connection of slope $r/p$, we have
$$
\psi_{\rho}(\mathcal{M}\otimes \rho^{+}N)\simeq 0
$$
\end{enumerate}
\end{proposition}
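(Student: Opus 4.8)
The plan is to reduce everything to a one–variable computation of slopes via the structure theory of formal differential modules, and to use the compatibility of nearby cycles with direct images along the ramification $\rho$.

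First I would replace $\mathcal{M}$ by its formalization $\hat{\mathcal{M}}_0 = \hat{\mathcal{O}}_{\mathds{C},0}\otimes \mathcal{M}$; since the slopes of $\mathcal{M}$ at $0$ only depend on $\hat{\mathcal{M}}_0$, and likewise (as observed after \eqref{annulation0}) the nearby cycles $\psi_\rho(\mathcal{M}\otimes \rho^+ N)$ only depend on the formalizations of $\mathcal{M}$ and $N$, this is harmless. After pushing the localized part $\mathcal{M}(\ast 0)$ forward, one is reduced to the case where $\mathcal{M}$ is a germ of meromorphic connection at $0$, because a $\mathcal{D}$-module supported at $0$ contributes nothing to the slopes and, being already regular, its twisted nearby cycles with $\rho^+N$, $N$ of positive slope, vanish — I would isolate this last vanishing as the local-cohomology part of the dévissage promised in \ref{reduction}. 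Now by \ref{moduleelementaire} and \cite[3.3]{SabbahpourLE}, write $\mathds{C}((t))\otimes\mathcal{M}$ as a direct sum of elementary modules $\El(\rho_j,\varphi_j,R_j)$; the slopes of $\mathcal{M}$ at $0$ are exactly the $\ord\varphi_j/p_j$. Since $\psi_\rho$ and $\otimes\rho^+N$ commute with finite direct sums, it suffices to treat one elementary summand, and then by projection formula plus compatibility of $\psi$ with the finite (hence proper) pushforward along a common ramification covering, to treat the case where $\mathcal{M}$ itself is of the form $\mathcal{E}^{\varphi}\otimes R$ with $R$ regular.

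At that point the implication "(1) $\Rightarrow$ (2)" becomes: if $\ord\varphi/(\text{ramification degree})\neq r$, then for every meromorphic connection $N$ of slope $r/p$, $\psi_\rho(\mathcal{E}^\varphi\otimes R\otimes\rho^+N)\simeq 0$. After a further common ramification, $N$ is itself a sum of elementary modules, each of the form $\mathcal{E}^{\psi}\otimes R'$ with $\ord\psi/(\deg) = r/p$, so $\mathcal{M}\otimes\rho^+N$ pulled back is a sum of terms $\mathcal{E}^{\varphi - \rho^\ast\psi}\otimes(\text{regular})$; the exponent $\varphi - \rho^\ast\psi$ has a pole of order reflecting $\max(\ord\varphi, p\cdot\ord\psi/(\text{deg }N)) > 0$ precisely when $r\neq$ the slope of $\mathcal{M}$, and a meromorphic connection whose pullback to a covering has a strictly positive slope — equivalently is purely irregular with no flat part — has vanishing nearby cycles $\psi_x(\text{it})\simeq 0$. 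This is the standard one-dimensional fact that $\psi_x(\mathcal{E}^{c/x^k}\otimes R)=0$ for $k\geq 1$; I would cite it from the references on nearby cycles in the excerpt's footnote. For "(2) $\Rightarrow$ (1)" I argue by contraposition: if $r$ \emph{is} a slope, pick the elementary summand $\El(\rho,\varphi,R)$ with $\ord\varphi/p = r$, and exhibit $N$ = the rank-one meromorphic connection $\mathcal{E}^{-\varphi_0}$ where $\varphi_0\in\mathds{C}[t^{-1}]$ is chosen with $\ord\varphi_0 = rp$ so that $\varphi - \rho^\ast\varphi_0$ acquires a trivial (zero) leading part on that summand; then $\mathcal{E}^{\varphi}\otimes R\otimes\rho^+\mathcal{E}^{-\varphi_0}$ has a regular summand, whose $\psi_\rho$ is nonzero, so $\psi_\rho(\mathcal{M}\otimes\rho^+N)\neq 0$ and (2) fails.

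The main obstacle I expect is the bookkeeping around the ramification $\rho:x\mapsto x^p$ and the auxiliary ramifications needed to split both $\mathcal{M}$ and $N$ into elementary pieces: one must check that $\psi_\rho(\mathcal{M}\otimes\rho^+N)$ is computed correctly after such a base change — i.e. that $\psi$ commutes with the relevant finite pushforward (this is where \cite{MS} enters) and that the slope normalization $r/p$ for $N$ versus $r$ for $\mathcal{M}$ is consistent under $\rho^+$ (a slope-$s$ connection pulls back under $x\mapsto x^p$ to slope $ps$, which is exactly why the statement pairs slope $r$ for $\mathcal{M}$ with slope $r/p$ for $N$). Once the ramification combinatorics is set up cleanly, the vanishing/non-vanishing of $\psi_x$ for elementary rank-one modules is elementary and the equivalence falls out; I would also remark that this proposition is precisely the $n=1$ seed of Theorem \ref{theoremprincipalraffiné}, since it identifies nearby slopes with ordinary slopes in dimension one.
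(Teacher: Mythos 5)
Your overall strategy coincides with the paper's: formalize at $0$, decompose $\mathcal{M}$ into elementary modules, prove $(1)\Rightarrow(2)$ by observing that $\mathcal{M}\otimes\rho^{+}N$ then has only positive slopes and that such modules have vanishing nearby cycles, and prove $(2)\Rightarrow(1)$ by exhibiting a twist $N$ that ``untwists'' an elementary factor of slope $r$. The first direction is fine. The gap is in your construction of $N$ for the second direction. You take $N=\mathcal{E}^{-\varphi_0}$ with $\varphi_0\in\mathds{C}[t^{-1}]$, a rank-one exponential in the downstairs variable $t$. Such a module has slope $\ord\varphi_0\in\mathds{Z}_{\geq 0}$ (and with your normalization $\ord\varphi_0=rp$ its slope would be $rp$, not $r/p$); but $N$ is required to have slope $r/p$, which is an arbitrary positive rational. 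So no rank-one exponential downstairs can serve as $N$ unless $r/p$ happens to be an integer. Moreover, even in that case, $\rho^{\ast}\varphi_0$ pulled back to the $u$-cover on which the elementary factor lives is a polynomial in $u^{-pp'}$, so it can cancel at most the leading term of a general $\varphi(u)\in\mathds{C}[u^{-1}]$; cancelling the leading part alone only lowers the slope, it does not produce the regular summand you need to conclude $\psi_\rho(\mathcal{M}\otimes\rho^{+}N)\neq 0$.

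Both problems are repaired by taking $N$ to be an elementary module rather than a rank-one exponential: the paper sets $N:=\rho_{+}\El(\rho',-\varphi(u))=\El(\rho\rho',-\varphi(u))$, a connection of rank $pp'$ and slope $q/pp'=r/p$ over $\mathds{C}((t))$. Its pullback satisfies $(\rho\rho')^{+}\El(\rho\rho',-\varphi(u))\simeq\bigoplus_{\zeta^{pp'}=1}\mathcal{E}^{-\varphi(\zeta u)}$, and the summand with $\zeta=1$ cancels $\varphi$ \emph{exactly}, so that after the projection formula and the compatibility of $\psi$ with proper direct images one finds $\psi_{\rho\rho'}R$, of positive rank, as a direct factor of $\psi_\rho(\mathcal{M}\otimes\rho^{+}N)$. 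You already have all the needed ingredients (elementary modules, compatibility of $\psi$ with finite pushforward); you just need to apply them to the construction of $N$ as well as to the decomposition of $\mathcal{M}$.
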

\begin{proof}
Since $\psi$  is not sensitive to localization and formalization, one can work formally at 0 and suppose that  $\mathcal{M}$ and $N$ are differential $\mathds{C}((x))$-modules. \\ \indent
Let us prove $(2)\Longrightarrow (1)$ by contraposition. Define $\rho^{\prime}: u\longrightarrow u^{p^{\prime}}=x$, $\varphi(u)\in \mathds{C}[u^{-1}]$ with $q=\ord\varphi(u)$ and $R$ a $\mathds{C}((u))$-regular module such that $\El(\rho^{\prime}, \varphi(u), R)$ is a non zero elementary factor  \ref{moduleelementaire} of $\mathcal{M}$ with slope $r=q/p$. Define
$$
N:=\rho_+\El(\rho^{\prime}, -\varphi(u))=\El(\rho\rho^{\prime}, -\varphi(u))
$$
The module  $N$ has slope $q/pp^{\prime}=r/p$. A direct factor of $\psi_{\rho}(\mathcal{M}\otimes \rho^{+}N)$ is
\begin{align*}
\psi_{\rho}(\rho_{+}^{\prime}(\mathcal{E}^{\varphi}\otimes R)\otimes \rho^{+} N))&\simeq
\psi_{\rho}(\rho_{+}^{\prime}(\mathcal{E}^{\varphi}\otimes R)\otimes \rho^{+}\El(\rho\rho^{\prime}, -\varphi(u)))
 \\
 & \simeq \psi_{\rho}(\rho_{+}^{\prime}(\mathcal{E}^{\varphi}\otimes R\otimes (\rho\rho^{\prime})^{+}\El(\rho\rho^{\prime}, -\varphi(u)))\\
 &\simeq
 \psi_{\rho\rho^{\prime}}(\mathcal{E}^{\varphi}\otimes R\otimes (\rho\rho^{\prime})^{+}\El(\rho\rho^{\prime}, -\varphi(u)))
\end{align*}
where the last identification comes from the compatibility of $\psi$ with proper direct image. By \cite[2.4]{SabbahpourLE}, we have
$$
(\rho\rho^{\prime})^{+}\El(\rho\rho^{\prime}, -\varphi(u))\simeq \displaystyle{\bigoplus_{\zeta^{pp^{\prime}}=1}}\mathcal{E}^{-\varphi(\zeta u)}
$$
So $\psi_{\rho\rho^{\prime}} R$ is a direct factor of $\psi_{\rho}(\mathcal{M}\otimes \rho^{+}N)$ of rank $np(\rg R)>0$, and $(2)\Longrightarrow (1)$ is proved. \\ \indent
Let us prove $(1)\Longrightarrow (2)$. Let $N$ be a $\mathds{C}((t))$-differential module of slope $r/p$. Then $\rho^{+}N$ has slope $r$. Thus, the slopes of $\mathcal{M}\otimes \rho^{+}N$ are $> 0$. Hence, it is enough to show the following
\begin{lemme}
Let $M$ be a $\mathds{C}((x))$-differential module whose slopes are $>0$. Then, we have $\psi_{\rho}M\simeq 0$.
\end{lemme}
By Levelt-Turrittin decomposition, we are left to study the case where $M$ is a direct sum of modules of type $\mathcal{E}^{\varphi}\otimes R$, where $\varphi\in \mathds{C}[x^{-1}]$ and where $R$ is a regular $\mathds{C}((x))$-module. The hypothesis on the slopes of $M$ implies $\varphi\neq 0$, and the expected vanishing is standard. 
\end{proof}

\subsection{A vanishing criterion}\label{cycleprochbonne}
Let $\mathcal{M}$  be a germ of meromorphic connection at the origin $0\in \mathds{C}^{n}$. We suppose that $\mathcal{M}$ has  good formal structure at $0$. Let $D$ be the pole locus of $\mathcal{M}$. Let $\rho_p$ be a ramification of degree $p$ along the components of  $D$ as in  \eqref{decomposition}.
\begin{proposition}\label{lemmeannulation}
Let $f\in \mathcal{O}_{\mathds{C}^{n},0}$. Let us define $Z:=\divi f$ and suppose that $|Z|\subset D$.  We suppose that for every irreducible component $E$ of $|Z|$,  we have  $$r_E(\mathcal{M})\leq  r v_E(f)$$
Then  for every germ $N$ of meromorphic connection  at 0 with slopes $>r$, we have
\begin{equation}\label{psinulle}
\psi_{f}(\mathcal{M}\otimes f^{+}N)\simeq 0
\end{equation}
in a neighbourhood of $0$.
\end{proposition}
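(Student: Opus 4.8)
The plan is to reduce, by a ramification along $D$ and the good formal decomposition~\eqref{decomposition}, to the vanishing of the nearby cycles of a single elementary factor, and then to observe that tensoring with $f^{+}N$ makes that factor too irregular along $\divi f$ for its nearby cycles to survive. First I would work locally: $\psi$ commutes with localisation and formalisation, and the hypothesis $|Z|\subset D$ forces $f^{-1}(0)\subset D$, so it suffices to show that the germ of $\psi_{f}(\mathcal{M}\otimes f^{+}N)$ vanishes at an arbitrary $x\in D$. There I would choose coordinates with $D=\{x_{1}\cdots x_{i}=0\}$ and the ramification $\rho$ of~\eqref{decomposition}, giving $\rho^{+}\hat{\mathcal{M}}_{x}\simeq\bigoplus_{\varphi}\mathcal{E}^{\varphi}\otimes\mathcal{R}_{\varphi}$ with $\divi\varphi\leq 0$. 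Since $\rho$ is finite and generically étale, averaging over its Galois group --- possible because we work over $\mathds{C}$ --- realises $\hat{\mathcal{M}}_{x}$ as a $\mathcal{D}$-linear direct summand of $\rho_{+}\rho^{+}\hat{\mathcal{M}}_{x}$. Applying the additive functors $(-)\otimes f^{+}N$ and $\psi_{f}$, the projection formula, and the compatibility of $\psi$ with proper direct image~\cite{MS}, the germ at $x$ of $\psi_{f}(\mathcal{M}\otimes f^{+}N)$ becomes a direct summand of the germ at $x$ of $\rho_{+}\psi_{g}(\rho^{+}\mathcal{M}\otimes g^{+}N)$, where $g:=f\circ\rho$. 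So I would be reduced to proving, for each $\varphi$ occurring in the decomposition, that $\psi_{g}(\mathcal{E}^{\varphi}\otimes\mathcal{R}_{\varphi}\otimes g^{+}N)\simeq 0$; a further ramification, absorbed into $\rho$, may be used to ensure that this tensor product and $g^{+}N$ have good formal structure.

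The key input is a slope estimate. In the chosen coordinates $g$ is a unit times a monomial $x_{1}^{c_{1}}\cdots x_{i}^{c_{i}}$, with $c_{j}=v_{E_{j}}(g)$ for $E_{j}:=\{x_{j}=0\}$, and $|\divi g|$ is the union of the $E_{j}$ with $c_{j}\geq 1$. Fix such a $j$. By the theorem of Sabbah and Mochizuki~\cite[I~2.4.3]{Sabbahdim},~\cite[2.19]{MochStokes} relating the good formal model at $x$ with the generic models along the components of $D$, the generic slope of $\mathcal{E}^{\varphi}\otimes\mathcal{R}_{\varphi}$ along $E_{j}$ is at most the highest generic slope $r_{E_{j}}(\rho^{+}\mathcal{M})$; since $\rho$ multiplies slopes and multiplicities along $E_{j}$ by the same factor, the hypothesis $r_{E}(\mathcal{M})\leq r\,v_{E}(f)$, applied to $\rho(E_{j})$ --- a component of $|\divi f|$ since $c_{j}\geq 1$ --- yields $r_{E_{j}}(\rho^{+}\mathcal{M})\leq r\,c_{j}$. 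On the other hand, $N$ has all its slopes $>r$, so along $E_{j}$ every generic slope of $g^{+}N$ equals $c_{j}$ times a slope of $N$, hence is $>r\,c_{j}$. Decomposing $g^{+}N$ formally at the generic point of $E_{j}$ and using that tensoring $\mathcal{E}^{\varphi}\otimes\mathcal{R}_{\varphi}$ --- of generic slope $\leq r c_{j}$ along $E_{j}$ --- with an elementary factor of strictly larger slope along $E_{j}$ produces only factors of that larger slope, I would conclude that \emph{every} exponential factor of $\mathcal{E}^{\varphi}\otimes\mathcal{R}_{\varphi}\otimes g^{+}N$ has a genuine pole along \emph{every} component of $|\divi g|$.

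The last step would be a monomial vanishing lemma: if a germ $\mathcal{Q}$ of meromorphic connection has good formal structure, pole locus contained in a normal crossing divisor, and all of its exponential factors have a pole along every component $E$ of $|\divi g|$, then $\psi_{g}\mathcal{Q}\simeq 0$ whenever $g$ is a monomial in the local equations of those components. Here I would work at a point, extract roots of the unit of $g$ in the coordinate directions occurring in it to reduce to $g=\prod_{j}x_{j}^{c_{j}}$, reduce once more (ramification, trace, compatibility of $\psi$ with proper direct image) to a single factor $\mathcal{E}^{\theta}\otimes\mathcal{R}$, and then, away from the pairwise intersections of the $\{x_{j}=0\}$ --- where $g$ is a coordinate times a unit --- formalise and ramify in that coordinate to fall onto the one-dimensional vanishing of the lemma in the proof of~\ref{propdim1}, with the remaining coordinates as inert parameters, since $\theta$ contributes a strictly positive slope there; a dévissage along the stratification of $|\divi g|$ by the multi-intersections of its components, run again through proper direct images of blow-ups of the strata, should then carry the vanishing from the open strata to the deeper ones. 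The hard part is precisely this monomial statement: handling $\psi_{g}$ when the zero locus of $g$ has several branches, and arranging the successive ramifications and formalisations so that the one-dimensional vanishing can be applied uniformly. Once it is available, the slope estimate of the previous paragraph closes the argument.
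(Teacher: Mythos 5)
Your reduction steps follow the paper's own route quite closely: localize, ramify along $D$ via the $\rho$ of the good formal decomposition, use the (split) surjectivity of $\rho_{+}\rho^{+}\mathcal{M}\to\mathcal{M}$ together with the projection formula and compatibility of $\psi$ with proper direct image, and then invoke the Sabbah--Mochizuki comparison to see that along every component $E_j$ of $|\divi g|$ the polar order of the factor $\varphi$ of $\mathcal{M}$ is at most $r\,v_{E_j}(g)$ while that of the exponential coming from $g^{+}N$ is strictly larger. All of this is correct and is exactly how the paper arrives at its Lemma~\ref{lemmeannulation2}: after these reductions one must show $\psi_{x^{a}}\bigl(\mathcal{E}^{g(x)/x^{b}+h(x)/x^{a}}\otimes\mathcal{R}\bigr)\simeq 0$ with $b_i<a_i$ on $\Supp a$.

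The genuine gap is in your last paragraph, which is where the actual content of the proposition lives. The ``monomial vanishing lemma'' you postulate is not established by the argument you sketch. Vanishing of $\psi_{g}\mathcal{Q}$ on the open strata of $|\divi g|$ (where $g$ is a unit times a single coordinate and the one-dimensional lemma of~\ref{propdim1} applies) says nothing about the deeper strata: $\psi_{g}\mathcal{Q}$ is a priori supported there, exactly as a skyscraper, and nearby cycles do not propagate vanishing inward along a stratification. Nor does blowing up the strata help: the exceptional divisor is a new component of $\divi(g\circ\pi)$ along which the pulled-back exponential factors may have smaller (even zero) relative polar order, so the inductive configuration does not improve, and the reduction via $\pi_{+}\pi^{+}\mathcal{Q}\to\mathcal{Q}$ just returns you to a problem of the same shape. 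The paper resolves this by a direct computation: push forward by the graph embedding of $x\mapsto x^{\mathds{1}_A}$, take the generator $s=x^{d}\delta$, and prove $s\in V_{-1}s$ for the Kashiwara--Malgrange filtration by explicit manipulation of the relations $x_1\partial_1 s=\cdots$; note in particular the case where $\varphi$ has a pole along a component \emph{not} in $\Supp a$, which requires the extra step of applying $x_i\partial_i$ to kill the term $x^{M-a}s$ and is not captured by any ``one-dimensional vanishing with inert parameters.'' Without a proof of this $V$-filtration computation (or an equivalent substitute), your argument stops exactly at the point where the proposition becomes nontrivial.
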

\begin{proof}
Let us choose local coordinates $(x_1,\dots, x_n)$ and $a \in \mathds{N}^{n}$ such that $f$ is the function $x\longrightarrow x^a$.
Take $N$ with slopes $> r$. One can always suppose that $N$ is a $\mathds{C}((t))$-differential module and $p=qk$ where $\rho^{\prime}:t\longrightarrow t^{k}$ decomposes  $N$.\\ \indent
The morphism $\rho_p$ is a finite cover away from $D$, so the canonical adjunction morphism
\begin{equation}\label{morphsurjectif}
\xymatrix{
\rho_{p+}\rho^{+}_p \mathcal{M} \ar[r]& \mathcal{M} 
}
\end{equation}
is surjective away from $D$. So the cokernel of \eqref{morphsurjectif} has support in  $D$. From \cite[3.6-4]{Mehbsmf}, we know that both sides of  \eqref{morphsurjectif} are localized along $D$. So \eqref{morphsurjectif} is surjective. We thus have to prove
\begin{equation}\label{annulationvoulue}
\psi_{f\rho_p}(\rho^{ +}_p\mathcal{M}\otimes (f\rho_p)^{+}N)\simeq 0
\end{equation}
Since $|Z|\subset D$, we have $f\rho_p=\rho^{\prime}f\rho_q$. So the left hand side of \eqref{annulationvoulue} is a direct sum of  $k$ copies of 
\begin{equation}\label{kcopies}
\psi_{f\rho_q}(\rho^{ +}_p\mathcal{M}\otimes (f\rho_p)^{+}N)
\end{equation}
We thus have to prove that \eqref{kcopies} is 0 in a neighbourhood of 0. We have
$$
(f\rho_p)^{+}N\simeq (f\rho_q )^{+}\rho^{\prime +}N
$$
with $\rho^{\prime +}N$ decomposed with slopes $>rk$. The zero locus of  $f\rho_q$ is $|Z|$, and if $E$ is an irreducible component of $|Z|$, the highest generic slope of  $\rho^{+}_p\mathcal{M}$ along $E$ is $$r_{E}(\rho^{+}_p\mathcal{M})=p \cdot r_{E}(\mathcal{M})\leq  r k  \cdot q \cdot v_{E} (f)=rk \cdot v_{E} (f\rho_q)$$
Hence we can suppose that $\rho_p=\id$ and that $N$ is decomposed. \\ \indent
Take
$$
N=\mathcal{E}^{P(t)/t^{m}}\otimes R
$$
with $P(t)\in \mathds{C}[t]$ satisfying $P(0)\neq 0$, with $m>r$ and with $R$ regular. Since $\psi$ is insensitive to formalization, one can suppose
$$
\mathcal{M}=\mathcal{E}^{\varphi(x)}\otimes \mathcal{R}
$$
with  $\varphi(x)$ as in \ref{bonnedecompositionformelle} $(3)$ and $\mathcal{R}$ regular. By Sabbah-Mochizuki theorem, the multiplicity of $-\divi\varphi(x)$ along a component $D^{\prime}$ of $D$ is a generic slope of  $\mathcal{M}$ along $D^{\prime}$. Thus, one can write $\varphi(x)=g(x)/x^b$ where $g(0)\neq 0$ and  where the $b_i$ are such that if $i\in \Supp a$, we have $b_i\leq r a_i<ma_i$. 
We thus have to prove the
\begin{lemme}\label{lemmeannulation2}
Take $g, h\in \mathcal{O}_{\mathds{C}^{n},0}$ such that $g(0)\neq 0$ and $h(0)\neq 0$. Let $\mathcal{R}$ be a regular meromorphic connection with poles contained in $x_1\cdots x_n=0$. Take $a,b\in \mathds{N}^{\llbracket 1,n\rrbracket}$ such that $A:=\Supp a$ is non empty and $b_i< a_i$ for every $i\in A$. Then
$$
\psi_{x^a}(\mathcal{E}^{g(x)/x^b+h(x) /x^{a}}\otimes \mathcal{R})\simeq 0
$$
in a neighbourhood of 0. 
\end{lemme}
\end{proof}
\subsection{Proof of \ref{lemmeannulation2}}
We define $\mathcal{M} :=\mathcal{E}^{g(x)/x^b+h(x) /x^{a}}\otimes \mathcal{R}$. Since $A$ is not empty, a change of variable allows one to suppose $h=1$. If $\Supp b\subset A$, a change of variable shows that  \ref{lemmeannulation2} is a consequence of \ref{dernierlemmeannulation}. Let $i\in \Supp b$ be an integer such that $i\notin A$. Using $x_i$, a change of variable allows one to suppose  $g=1$.
Let $p_1,\dots, p_n\in \mathds{N}^{\ast}$ such that $a_j p_j$ is independent from $j$ for every $j\in A$  and $p_j=1$ if $j\notin A$. Let $\rho_p$ be the morphism $x\longrightarrow x^p$. Like in  \eqref{morphsurjectif}, we see that 
$$
\xymatrix{
\rho_{p+}\rho^{+}_p \mathcal{M} \ar[r]& \mathcal{M} 
}
$$
is surjective. We are thus left to prove that \ref{lemmeannulation2} holds for multi-indices $a$ such that $a_j$ does not depend on  $j$ for every $j\in A$. Let us denote by $\mathds{1}_A$ the characteristic function of $A$. From \cite[3.3.13]{PTM}, it is enough to prove
$$
\psi_{x^{\mathds{1}_A}}(\mathcal{E}^{1/x^b+1 /x^{a}}\otimes \mathcal{R})\simeq 0
$$
Using the fact that $\mathcal{R}$ is a successive extension of regular modules of rank 1, one can suppose that $\mathcal{R}=x^{c}$, where $c\in \mathds{C}^{\llbracket 1,n\rrbracket}$. Let 
\[
\xymatrix{ 
\mathds{C}^{n} \ar@{^{(}->}[r]^-{i} \ar[rd]_-{x^{\mathds{1}_A}}& \mathds{C}^{n} \times \mathds{C}\ar[d]   \\
                                                                        &       \mathds{C}
}
\] 
be the inclusion given by the graph of $x\longrightarrow x^{\mathds{1}_A}$. Let $t$ be a coordinate on the second factor of $\mathds{C}^{n} \times \mathds{C}$. We have to prove
$$
\psi_{t}(i_+(x^{c}\mathcal{E}^{1/x^b+1 /x^{a}}))\simeq 0
$$
Define $\delta:=\delta(t-x^{\mathds{1}_A})\in i_+(x^{c}\mathcal{E}^{1/x^b+1 /x^{a}})$ and let $(V_k)_{k\in \mathds{Z}}$ be the Kashiwara-Malgrange filtration on  $\mathcal{D}_{\mathds{C}^{n} \times \mathds{C}}$ relative to $t$. 
For $d\in \mathds{N}^{\llbracket 1,n\rrbracket}$  such that $x^d=0$ is the pole locus of $x^{c}\mathcal{E}^{1/x^b+1 /x^{a}}$, the family of sections $x^d$ generates $x^{c}\mathcal{E}^{1/x^b+1 /x^{a}}$. For such $d$, the family  $s:=x^d\delta$ generates $i_+(x^{c}\mathcal{E}^{1/x^b+1 /x^{a}})$. We are left to prove $s\in V_{-1}s$. One can always suppose that $1\in A$.
$$
x_1\partial_1 s=(d_1+c_1)s-\frac{b_1}{x^{b}}s-\frac{a_1}{x^{a}}s- x^{\mathds{1}_A}\partial_t s
$$
We define $M\in \mathds{N}^{\llbracket 1,n\rrbracket}$ by $M_k=\max(a_k, b_k)$ for every $k\in  \llbracket 1,n\rrbracket$. We thus have
\begin{equation}\label{eq0}
x^{M}x_1\partial_1 s=(d_1+c_1)x^{M}s-b_1x^{M-b}s-a_1x^{M-a}s- x^{M} x^{\mathds{1}_A}\partial_t s
\end{equation}
We have $M=a+b_{A^{c}}=\mathds{1}_A+ (a-\mathds{1}_A)+b_{A^{c}}=\mathds{1}_A+b+m$ with $m\in \mathds{N}^{\llbracket 1,n\rrbracket}$. So $$x^{M-b}s=x^m ts\in V_{-1}s$$ 
Moreover, we have
$$
x^{M}x_1\partial_1 s=x_1\partial_1 x^M s-M_1 x^M s=x_1\partial_1 x^{m+b} ts-M_1 x^{m+b} ts\in V_{-1}s
$$
and
$$
x^{M} x^{\mathds{1}_A}\partial_t s=x^{m+b}\partial_t  x^{2\times\mathds{1}_A}s=x^{m+b}\partial_t t^{2}s =2x^{m+b} ts+x^{m+b}t(t\partial_t) s \in V_{-1}s
$$
So \eqref{eq0} gives
\begin{equation}\label{eq1}
x^{M-a}s \in V_{-1}s
\end{equation}
Let us recall that $i$ is such that $i\notin A$ and $i\in \Supp b$. In particular  $(M-a)_i=b_i\neq 0$ and $\partial_i \delta=0$. Applying $x_i\partial_i$ to \eqref{eq1}, we obtain
$$
(d_i+c_i+b_i) x^{M-a}s -b_i\frac{ x^{M-a}}{x^b}s\in V_{-1}s
$$
so from \eqref{eq1}, we deduce $x^{M-a-b}s\in V_{-1}s$ . We have $M-a-b=-b_A$, so by multiplying $x^{M-a-b}s$ by $x^{b_A}$, we get $s\in V_{-1}s$.

\subsection{}
The aim of this paragraph is to prove the following
\begin{lemme}\label{dernierlemmeannulation}
Let $\alpha,a\in \mathds{N}^{\llbracket 1,n\rrbracket}$ such that $\Supp \alpha$ is not empty and $\Supp \alpha\subset \Supp a$. Let $\mathcal{R}$ be a regular meromorphic connection with poles contained in  $x_1\cdots x_n=0$.  We have
$$
\psi_{x^\alpha}(\mathcal{E}^{1/x^{a}}\otimes \mathcal{R})\simeq 0
$$
\end{lemme}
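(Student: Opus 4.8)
The plan is to follow the same pattern as the proof of Lemma~\ref{lemmeannulation2}, taking advantage of the fact that here the support condition $\Supp \alpha \subset \Supp a$ makes the argument much lighter. \emph{First I would reduce to the simplest data.} As $\psi$ is triangulated and $\mathcal{E}^{1/x^{a}}\otimes(-)$ is exact, the fact that $\mathcal{R}$ is a successive extension of regular meromorphic connections of rank $1$ lets one assume $\mathcal{R}=x^{c}$ with $c\in \mathds{C}^{\llbracket 1,n\rrbracket}$. Then, setting $A:=\Supp\alpha$ and choosing $p_{j}\in \mathds{N}^{\ast}$ with $p_{j}\alpha_{j}$ independent of $j\in A$ and $p_{j}=1$ for $j\notin A$, the adjunction morphism $\rho_{p+}\rho_{p}^{+}\mathcal{M}\longrightarrow \mathcal{M}$ is surjective by the argument giving \eqref{morphsurjectif} (both sides are localized along the pole divisor), so by compatibility of $\psi$ with proper direct image it is enough to treat $\rho_{p}^{+}\mathcal{M}=\mathcal{E}^{1/x^{pa}}\otimes x^{pc}$. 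Since $x^{p\alpha}$ is now a power of $x^{\mathds{1}_{A}}$, \cite[3.3.13]{PTM} reduces us to proving
$$
\psi_{x^{\mathds{1}_{A}}}\bigl(\mathcal{E}^{1/x^{a}}\otimes x^{c}\bigr)\simeq 0 ,
$$
where the inclusion $A\subset \Supp a$ survives the operation; in particular $\mathds{1}_{A}\leq a$ coordinatewise, which is the one place the hypothesis is used.

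\emph{Next I would pass to the graph embedding.} Let $i:\mathds{C}^{n}\hookrightarrow \mathds{C}^{n}\times \mathds{C}$ be the graph of $x\mapsto x^{\mathds{1}_{A}}$, let $t$ be the coordinate on the last factor, so that $\psi_{x^{\mathds{1}_{A}}}(\mathcal{M})\simeq \psi_{t}(i_{+}\mathcal{M})$. Put $\delta:=\delta(t-x^{\mathds{1}_{A}})$, pick $d$ with $x^{d}=0$ the pole locus so that $s:=x^{d}\delta$ generates $i_{+}(x^{c}\mathcal{E}^{1/x^{a}})$, and let $(V_{k})_{k}$ be the Kashiwara--Malgrange filtration relative to $t$. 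As in the proof of \ref{lemmeannulation2}, it suffices to show $s\in V_{-1}s$. Choosing $1\in A$ one computes
$$
x_{1}\partial_{1}s=(d_{1}+c_{1})s-\frac{a_{1}}{x^{a}}s-x^{\mathds{1}_{A}}\partial_{t}s ,
$$
and multiplication by $x^{a}$ gives
$$
x^{a}x_{1}\partial_{1}s=(d_{1}+c_{1})\,x^{a}s-a_{1}s-x^{a}x^{\mathds{1}_{A}}\partial_{t}s .
$$

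\emph{The core of the proof} is then to verify that the three terms other than $a_{1}s$ all lie in $V_{-1}s$. From $x^{\mathds{1}_{A}}s=ts$ and $\mathds{1}_{A}\leq a$ one gets $x^{a}s=x^{a-\mathds{1}_{A}}\,ts\in V_{-1}s$; combining this with $x^{a}x_{1}\partial_{1}=x_{1}\partial_{1}x^{a}-a_{1}x^{a}$ and $x_{1}\partial_{1}\in V_{0}\mathcal{D}$ shows $x^{a}x_{1}\partial_{1}s\in V_{-1}s$. Finally, differentiating $(t-x^{\mathds{1}_{A}})\delta=0$ yields $(t-x^{\mathds{1}_{A}})\partial_{t}\delta=-\delta$, hence $(x^{\mathds{1}_{A}})^{2}\partial_{t}\delta=2t\delta+t^{2}\partial_{t}\delta$, so that $x^{a}x^{\mathds{1}_{A}}\partial_{t}s=x^{a-\mathds{1}_{A}}\bigl(2ts+t^{2}\partial_{t}s\bigr)\in V_{-1}s$ since $t^{2}\partial_{t}\in V_{-1}\mathcal{D}$. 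Feeding this back into the last display, one obtains $a_{1}s\in V_{-1}s$, and $a_{1}\neq 0$ gives the desired $s\in V_{-1}s$, whence $\psi_{x^{\mathds{1}_{A}}}(\mathcal{M})\simeq 0$.

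I expect the delicate point to be the $V$-filtration bookkeeping of the previous paragraph — checking that each membership is genuinely legitimate with the chosen filtration conventions. This is exactly where $\Supp\alpha\subset\Supp a$ matters: after the normalization $\alpha=\mathds{1}_{A}$ it forces $x^{a}$ to be a multiple of $x^{\mathds{1}_{A}}$, which acts as $t$ on $i_{+}\mathcal{M}$, so that one full factor of $t$ can always be pulled out of the offending terms. Absent a domination hypothesis of this kind, one would be forced into the extra manipulation with an index $i\notin A$ that occupies most of the proof of \ref{lemmeannulation2}.
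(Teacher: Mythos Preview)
Your proof is correct and follows essentially the same route as the paper's: ramify to make $\alpha$ constant on $A=\Supp\alpha$, invoke \cite[3.3.13]{PTM} to reduce to $\alpha=\mathds{1}_A$, reduce $\mathcal{R}$ to rank~$1$, pass to the graph embedding, and show $s\in V_{-1}s$ by multiplying the relation for $x_1\partial_1 s$ through by $x^{a}$ and checking that all terms other than $a_1 s$ lie in $V_{-1}s$ via $x^{\mathds{1}_A}\delta=t\delta$ and $\mathds{1}_A\leq a$. The only differences from the paper are cosmetic (the order in which you reduce $\mathcal{R}$ versus ramify, and your explicit derivation of $(x^{\mathds{1}_A})^{2}\partial_t\delta=2t\delta+t^{2}\partial_t\delta$ from $(t-x^{\mathds{1}_A})\delta=0$ rather than writing $\partial_t t^{2}$ directly).
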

\begin{proof}
Let $p_1,\dots, p_n$ be integers such that $\alpha_i p_i$ does not depend of  $i$ for every $i\in \Supp \alpha$ (we denote by $m$ this integer) and $p_i=1$ if $i\neq \Supp\alpha$. Let $\rho_p$ be the morphism $x\longrightarrow x^p$. Like in \eqref{morphsurjectif}, the morphism
$
\rho_{p+}\rho^{+}_p \mathcal{M} \longrightarrow  \mathcal{M} 
$
is surjective. We are left to prove \ref{dernierlemmeannulation} for $\alpha$ such that $\alpha_i$ does not depend of $i$ for every  $i\in \Supp \alpha$. From \cite[3.3.13]{PTM}, one can suppose $\alpha_i=1$ for every  $i\in \Supp \alpha$. So $\alpha\leq a$.\\ \indent
One can suppose  $\mathcal{R}=x^{b}$ where $b\in \mathds{N}^{\llbracket 1,n\rrbracket}$.
Let
\[
\xymatrix{ 
\mathds{C}^{n} \ar@{^{(}->}[r]^-{i} \ar[rd]_-{x^{\alpha}}& \mathds{C}^{n} \times \mathds{C}\ar[d]   \\
                                                                        &       \mathds{C}
}
\] 
be the inclusion given by the graph of $x\longrightarrow x^{\alpha}$. Let $t$ be a coordinate on the second factor of $\mathds{C}^{n} \times \mathds{C}$. We have to show
$$
\psi_{t}(i_+(x^{b}\mathcal{E}^{1/x^{a}}))\simeq 0
$$
Define $\delta:=\delta(t-x^{\alpha})\in i_+(x^{b}\mathcal{E}^{1/x^{a}})$.
For $c\in \mathds{N}^{\llbracket 1,n\rrbracket}$ such that $\Supp c\subset \Supp a\cup \Supp b$, the family of sections $x^c$ generates $x^{b}\mathcal{E}^{1/x^{a}}$. For such $c$, the family $s:=x^c\delta$ generates $i_+(x^{b}\mathcal{E}^{1/x^{a}})$. It is thus enough to show $s\in V_{-1}s$. Let us choose $i\in \Supp \alpha$. We have
$$
x_i\partial_i s=(c_i+b_i)s-\frac{a_i}{x^{a}}s- x^{\alpha}\partial_t s
$$
We have $\alpha\leq a$. Define $a=\alpha+a^{\prime}$. From
$$
x^{\alpha}x_i\partial_i s =x_i\partial_i x^{\alpha} s-x^{\alpha} s=x_i\partial_i t s-ts\in V_{-1}s
$$
we deduce that $a_i s+x^{a^{\prime}}x^{2\alpha}\partial_t s \in V_{-1}s$.
We also have $x^{2\alpha}\partial_t s=\partial_t x^{2\alpha} s=\partial_t t^{2} s=2ts+t (t\partial_t)s\in V_{-1}s$. Since $a_i\neq 0$, we deduce $s\in V_{-1}s$ and  \ref{dernierlemmeannulation} is proved.

\end{proof}
%

\section{Proof of theorem \ref{theoremprincipal} }
\subsection{Dévissage to the case of  meromorphic connections }\label{reduction}
Suppose that theorem \ref{theoremprincipal} is true for meromorphic connections for every choice of ambient manifold. Let us show that theorem \ref{theoremprincipal} is true for $\mathcal{M}\in \mathcal{D}_{\hol}^{b}(X)$. We argue by induction on  $\dim X$. The case where $X$ is a point is trivial. Let us suppose that $\dim X>0$. We define  $Y:=\Supp \mathcal{M}$ and we argue by induction on  $\dim Y$.
 \\ \indent
Let us suppose that $Y$ is a strict closed subset of $X$. We denote by $i:Y\longrightarrow X$ the canonical inclusion.  Let $\pi:\tilde{Y}\longrightarrow Y$ be a resolution of the singularities of $Y$ \cite{AHV} and $p:=i\pi$. The regular locus $\Reg Y$ of $Y$ is a dense open subset in  $Y$ and $\pi$ is an isomorphism above $\Reg Y$. By Kashiwara theorem, we deduce that the cone  $\mathcal{C}$ of the adjunction morphism
$$
\xymatrix{
p_{+}p^{\dag}\mathcal{M} \ar[r]& \mathcal{M}
}
$$
has support in $\Sing Y$, with $\Sing Y$ a strict closed subset in $Y$. Let $x\in X$ and let $B$ be a neighbourhood of $x$ with compact closure $\overline{B}$. Then, $p^{-1}(\overline{B})$ is compact. Since $\dim \tilde{Y}<\dim X$, theorem \ref{theoremprincipal} is true for $p^{\dag}\mathcal{M}\in \mathcal{D}_{\hol}^{b}(\tilde{Y})$. Let  $(U_i)$ be a finite family of open sets in $\tilde{Y}$ covering $p^{-1}(\overline{B})$ and such that for every $i$, the set $\Sl^{\nb}((p^{\dag}\mathcal{M})_{|U_i})$ is bounded by a rationnal $r_i$. Define $R=\max_i r_i$. \\ \indent
By induction hypothesis applied to $\mathcal{C}$, one can suppose at the cost of taking a smaller $B$ containing $x$
that the set  $\Sl^{\nb}(\mathcal{C}_{|B})$ is bounded by a rational $R^{\prime}$. Take $f\in \mathcal{O}_B$. We have a distinguished triangle
\begin{equation}\label{trianglede psi}
\xymatrix{
 \psi_{f}(p_{+}p^{\dag}\mathcal{M} \otimes f^{+}N) \ar[r]& \psi_{f}( \mathcal{M}\otimes f^{+}N)\ar[r]&  \psi_{f}(\mathcal{C}\otimes f^{+}N) \ar[r]^-{+1}&
}
\end{equation}
By projection formula and compatibility of $\psi$ with proper direct image, \eqref{trianglede psi} is isomorphic to 
$$
\xymatrix{
p_{+}\psi_{fp}(p^{\dag}\mathcal{M} \otimes (pf)^{+}N) \ar[r]& \psi_{f}( \mathcal{M}\otimes f^{+}N)\ar[r]&  \psi_{f}(\mathcal{C}\otimes f^{+}N) \ar[r]^-{+1}&
}
$$
So we have the desired vanishing on  $B$ for $r>\max(R,R^{\prime})$.
 \\ \indent
We are left with the case where $\dim \Supp \mathcal{M}=\dim X$. Let $Z$ be a hypersurface containing $\Sing \mathcal{M}$. We have a triangle
$$
\xymatrix{
R\Gamma_{[Z]}\mathcal{M} \ar[r]& \mathcal{M} \ar[r]& \mathcal{M}(\ast Z) \ar[r]^-{+1}&
}
$$
By applying the induction hypothesis to $R\Gamma_{[Z]}\mathcal{M}$, we are left to prove theorem  \ref{theoremprincipal} for $ \mathcal{M}(\ast Z) $. The module  $ \mathcal{M}(\ast Z) $ is a meromorphic connection, which concludes the reduction step.

\subsection{The case of  meromorphic connections}
At the cost of taking an open cover of $X$, let us take a resolution of turning points $p:\tilde{X}\longrightarrow X$ for $\mathcal{M}$ as given by Kedlaya-Mochizuki theorem. Let $D$ be the pole locus of $\mathcal{M}$. Since $p$ is an isomorphism above $X\setminus D$, the cone of
\begin{equation}\label{conemorphs0}
\xymatrix{
p_+ p^{+ }\mathcal{M} \ar[r]& \mathcal{M}
}
\end{equation}
has support in the pole locus $D$ of $\mathcal{M}$. 
From \cite[3.6-4]{Mehbsmf}, the left hand side of \eqref{conemorphs0} is localized along $D$. So \eqref{conemorphs0} is an isomorphism. We thus have a canonical isomorphism
$$
p_{+ }\psi_{fp}(p^{+ }\mathcal{M}\otimes (fp)^{+}N)\simeq
\psi_{f}(\mathcal{M}\otimes f^{+}N) 
$$
Since $p$ is proper, we see as in \ref{reduction} that we are left to prove theorem \ref{theoremprincipal} for $p^{+ }\mathcal{M}$. We thus suppose that $\mathcal{M}$ has a good formal structure. At the cost of taking an open cover, we can suppose that $D$ has only a finite number of irreducible components. Let $S$ be the divisor of highest generic slopes \ref{divplusgrandepente} of $\mathcal{M}$. Let $S_1, \dots, S_m$ be the irreducible components of $|S|$. Let us prove that $\Sl^{\nb}(\mathcal{M})$ is bounded by $\deg S$. It is is a local statement. Let $f\in \mathcal{O}_X$ and define $Z:=\divi f$. Let us denote by $|Z|$ (resp. $|S|$) the support of $Z$ (resp. $S$) and let us admit for a moment the validity of the following
\begin{proposition}\label{propositionprincipale}
Locally on $X$, one can find a proper birationnal morphism $\pi :\tilde{X}\longrightarrow X$ such that
\begin{enumerate}
\item\label{cond1prop} $\pi$ is an isomorphism above $X\setminus |Z|$.
\item\label{cond2prop}   $\pi^{-1}(|Z|)\cup \pi^{-1}(|S|)$  is a normal crossing divisor.
\item\label{cond3prop}  for every valuation $v_E$ measuring the vanishing order along an irreducible component $E$ of  $\pi^{-1}(|Z|)$, we have   
$$
v_E(S)\leq (\deg S) v_E(f)
$$
\end{enumerate}
\end{proposition}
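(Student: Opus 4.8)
The plan is to derive condition \ref{cond3prop} from a cleaner per-component estimate, and to obtain $\pi$ from a fine embedded resolution of $|Z|$ that carries $D$ along as a fixed boundary. The statement being local, fix $x\in X$ and work in a neighbourhood in which the pole locus $D=D_1\cup\cdots\cup D_n$ of $\mathcal{M}$ is a normal crossing divisor (part of the good formal structure) and $|Z|=\divi f$ is cut out by a single $f\in\mathcal{O}_X$. Writing $\deg S:=r_{D_1}(\mathcal{M})+\cdots+r_{D_n}(\mathcal{M})$ for the sum of the coefficients of $S=r_{D_1}(\mathcal{M})D_1+\cdots+r_{D_n}(\mathcal{M})D_n$, and recalling that all $r_{D_i}(\mathcal{M})\ge 0$, condition \ref{cond3prop} is implied by
\begin{equation}\label{percomp}
v_E(\pi^{\ast}D_i)\le v_E(f)
\end{equation}
for every $i$ and every component $E$ of $\pi^{-1}(|Z|)$: multiplying \eqref{percomp} by $r_{D_i}(\mathcal{M})\ge 0$ and summing over $i$ gives $v_E(S)=\sum_i r_{D_i}(\mathcal{M})\,v_E(\pi^{\ast}D_i)\le (\deg S)\,v_E(f)$. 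So it suffices to build $\pi$ satisfying \ref{cond1prop}, \ref{cond2prop} and \eqref{percomp}.

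For $\pi$ I would take a functorial embedded resolution of the reduced hypersurface $|Z|$, carrying $D$ along as a fixed boundary, as provided by the fine version of embedded resolution \cite{BMUniformization}. Thus $\pi$ is a composition of blow-ups along smooth centres $C_1,\dots,C_N$, where each $C_j$ is contained in the successive strict transform of $|Z|$ and has normal crossings with the running boundary (the strict transform of $D$ together with the exceptional divisors created so far), the process being continued on the strict transform of $|Z|$, and such that $\pi^{-1}(|Z|)\cup\pi^{-1}(D)$ is a normal crossing divisor. Since $D$ is itself normal crossing and since blowing up a smooth centre having normal crossings with the boundary keeps strict transforms of smooth divisors smooth and transverse to the new exceptional divisor, the failure of the normal-crossing property can only occur along the (strict transform of the) hypersurface $|Z|$; hence every $C_j$ lies over $|Z|$, which is \ref{cond1prop}. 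Condition \ref{cond2prop} follows because $|S|\subseteq D$, so $\pi^{-1}(|Z|)\cup\pi^{-1}(|S|)$ is a sub-divisor of the normal crossing divisor $\pi^{-1}(|Z|)\cup\pi^{-1}(D)$.

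It remains to establish \eqref{percomp}, which I would do by induction on $N$, in the slightly stronger form: for any composition $\pi$ of blow-ups of the above type resolving a reduced hypersurface $W$ with normal crossing boundary $\Delta$, one has $v_E(\pi^{\ast}B)\le v_E(\pi^{\ast}W)$ for every component $E$ of $\pi^{-1}(|W|)$ and every component $B$ of $\Delta$, where $\pi^{\ast}W$, $\pi^{\ast}B$ denote total transforms of the reduced divisors. For $N=0$ this is clear: for $E$ a component $Z_k$ of $|W|$ one has $v_{Z_k}(W)=1$ and $v_{Z_k}(B)\le 1$. For the inductive step, write $\pi=b\circ\pi'$ with $b$ the blow-up along $C:=C_1\subseteq|W|$ and $\pi'$ the remaining blow-ups, which resolve the strict transform $\tilde W$ of $W$ with boundary $\tilde\Delta+E_C$. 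Since $C\subseteq|W|$ we get $\ord_C W\ge 1$, while $B$ smooth and $C$ normal crossing with $\Delta$ force $\ord_C B\le 1$; pulling back through $b$, $b^{\ast}W=\tilde W+(\ord_C W)E_C$ and $b^{\ast}B=\tilde B+(\ord_C B)E_C$, with $\tilde B$ and $E_C$ among the components of $\tilde\Delta+E_C$. If $E$ is a component of $\pi^{-1}(|W|)$ lying over $|\tilde W|$, the induction hypothesis applied to $\pi'$ gives $v_E((\pi')^{\ast}\tilde B)\le v_E((\pi')^{\ast}\tilde W)$, so
\begin{align*}
v_E(\pi^{\ast}B)&=v_E((\pi')^{\ast}\tilde B)+(\ord_C B)\,v_E((\pi')^{\ast}E_C)\\
&\le v_E((\pi')^{\ast}\tilde W)+(\ord_C W)\,v_E((\pi')^{\ast}E_C)=v_E(\pi^{\ast}W).
\end{align*}
Because all later centres lie in the strict transform of $W$, the only component of $\pi^{-1}(|W|)$ not lying over $|\tilde W|$ is the strict transform of $E_C$; there $v_E((\pi')^{\ast}\tilde W)=v_E((\pi')^{\ast}\tilde B)=0$ and $v_E((\pi')^{\ast}E_C)=1$, so $v_E(\pi^{\ast}B)=\ord_C B\le 1\le\ord_C W=v_E(\pi^{\ast}W)$. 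Taking $W=|Z|$ and $B=D_i$, and using that $v_E(\pi^{\ast}|Z|)\le v_E(f)$ since $f$ is divisible by a reduced equation of $|Z|$, gives \eqref{percomp}.

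The heart of the matter — and the reason a careless resolution does not work, as pointed out in the introduction — is exactly this multiplicity control: a random embedded resolution of $|Z|$ will blow up centres along which $f$ vanishes to small order while the pulled-back $D_i$ vanish to large order, breaking \eqref{percomp} and, downstream, the bound $\deg S$ for the generic slopes of $p^{+}\mathcal{M}$. What makes the induction close is precisely that in the fine functorial resolution the centres are dictated by the singularities of $|Z|$ alone and remain permissible for the fixed normal crossing boundary $D$, so that $\ord_{C_j}(\text{transform of }f)\ge 1\ge\ord_{C_j}(\text{transform of the equation of }D_i)$ at each step; the main work is keeping track of this through all the charts of the resolution, together with the fact that the strict transforms of the $D_i$ stay smooth and transverse to the exceptional locus. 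Once \ref{propositionprincipale} is available, it will be combined in the sequel with the comparison \cite[I 2.4.3]{Sabbahdim}, \cite[2.19]{MochStokes} between the local good formal models of $\mathcal{M}$ and its generic models along $D$ in order to bound the generic slopes of $p^{+}\mathcal{M}$ and then apply the vanishing criterion \ref{lemmeannulation}.
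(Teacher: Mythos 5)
Your proof is correct and follows essentially the same route as the paper: the same fine embedded resolution of $|Z|$ from \cite{BMUniformization}, with centres contained in and nowhere dense in the strict transform of $|Z|$ and having normal crossings with the accumulated boundary, and the same induction along the blow-up sequence driven by the comparison $\ord_C(\text{boundary component})\leq 1\leq \ord_C(\text{strict transform of }|Z|)$. Your per-component estimate $v_E(\pi^{\ast}D_i)\leq v_E(f)$ is a clean repackaging of the paper's aggregated inequality comparing $\pi^{\ast}S$ with $(\deg S)\,\pi^{\ast}Z$, which the paper establishes by peeling off the last blow-up rather than the first.
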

Let us suppose that \ref{propositionprincipale} is true. At the cost of taking an open cover, let us take a morphism $\pi:\tilde{X}\longrightarrow X$  as in  \ref{propositionprincipale}. Since condition \eqref{cond1prop} is true, the cone of the canonical comparison morphism 
\begin{equation}\label{conemorphs}
\xymatrix{
\pi_+ \pi^{+ }\mathcal{M} \ar[r]& \mathcal{M}
}
\end{equation}
has support in $|Z|$. Since $f^{+}N$ is localized along  $|Z|$, we deduce that \eqref{conemorphs} induces an isomorphism
$$
\xymatrix{
(\pi_+ \pi^{+ }\mathcal{M}) \otimes f^{+}N   \ar[r]^-{\sim}& \mathcal{M}\otimes f^{+}N 
}
$$
Applying $\psi_f$ and using the fact that $\pi$ is proper, we see that it is enough to prove
\begin{equation}\label{ptitequation}
\psi_{f\pi}(\pi^{+}\mathcal{M}\otimes (f\pi)^{+}N)\simeq 0
\end{equation}
for every germ $N$ of meromorphic connection at the origin with slope $r >\deg S$. Since $(f\pi)^{+}N$ is localized along $\pi^{-1}(|Z|)$, the left-hand side of \eqref{ptitequation} is
\begin{equation}\label{psiapresresolution}
\psi_{f\pi}((\pi^{+}\mathcal{M})(\ast \pi^{-1}(|Z|))\otimes (f\pi)^{+}N)
\end{equation}
The vanishing of \eqref{psiapresresolution} is a local statement on $\tilde{X}$. Since \eqref{cond2prop}   and \eqref{cond3prop} are true, \ref{lemmeannulation} asserts that it is enough to show that for every irreducible component $E$ of $\pi^{-1}(|Z|)$, we have
$$r_E((\pi^{+}\mathcal{M})(\ast \pi^{-1}(|Z|)))\leq  (\deg S) v_E(f\pi)$$
Let us notice that $v_E(f\pi)=v_E(f)$. Let $P$ be a point in the smooth locus of $E$. Let 
$\varphi$ as in  \eqref{decomposition} for  $\mathcal{M}$ at the point $Q:=\pi(P)$. For $i=1,\dots, n$, let $t_i=0$ be an equation of $S_i$ in a neighbourhood of $Q$. Modulo a unit in  $\mathcal{O}_{X,Q}$, we have $\varphi=1/t_1^{r_1}\cdots t_n^{r_n}$ where $r_i\in \mathds{Q}_{\geq 0}$. If $u=0$ is a local equation  for $E$ in a neighbourhood of $P$, we have modulo a unit in  $\mathcal{O}_{\tilde{X},P}$
$$\varphi \pi=\frac{1}{u^{r_1v_E(t_1)}\cdots u^{r_n v_E(t_n)}}$$
So the slope of $\mathcal{E}^{\varphi \pi}(\ast \pi^{-1}(|Z|))$ along $E$ is $r_1v_E(t_1)+\cdots +r_n v_E(t_n)$. By  Sabbah-Mochizuki theorem, $r_i$ is a slope of $\mathcal{M}$ generically along $S_i$, so $r_i\leq r_{S_i}( \mathcal{M})$. We deduce that
$$
r_E(\pi^{+}\mathcal{M}(\ast \pi^{-1}(|Z|)))\leq 
\sum_i r_{S_i}( \mathcal{M}) v_E(t_i)=v_E(S)\leq  (\deg S) v_E(f)
$$
This concludes the proof of theorem  \ref{theoremprincipal} and theorem \ref{theoremprincipalraffiné}.

\subsection{Proof of \ref{propositionprincipale}}
At the cost of taking an open cover of $X$, let us take a finite sequence of blow-up
\begin{equation}\label{resolution}
\xymatrix{
\pi_n:X_n\ar[r]^-{p_{n-1}} & X_{n-1}\ar[r]^-{p_{n-2}} & \cdots  \ar[r]& X_1\ar[r]^-{p_0} & X_0=X
}
\end{equation}
given by 3.15 and 3.17 of \cite{BMUniformization} for $Z$ relatively to the normal crossing divisor $|S|$. Let $|Z|_i$ be the strict transform of $|Z|$ in $X_i$ and let $C_i$ be the center of $p_i$. We define inductively $H_0=H$ and $H_{i+1}=p_{i}^{-1}(H_{i})\cup p^{-1}_i(C_i)$ for $i=1,\dots, n$, where $p_{i}^{-1}$ denotes the set theoretic inverse image. In particular $H_{i+1}$ is a closed subset of $X_{i+1}$. We will endow it with its canonical reduced structure. Then, \eqref{resolution} satisfies \\ \indent
$(i)$ $C_i$ is a smooth closed subset of $|Z|_i$.\\ \indent
$(ii)$ $C_i$ is nowhere dense in $|Z|_i$.\\ \indent
$(iii)$ $C_i$ and $H_i$ have normal crossing for every  $i$.\\ \indent
$(iv)$ $|Z|_n\cup H_n$ is a normal crossing divisor.\\ \noindent
Since $C_i$ and the components of $H_i$ are reduced and smooth, condition $(iii)$ means that locally on $X_i$, one can find coordinates $(x_1,\dots, x_k)$ such that $H_i$ is given by the equation $x_1\cdots x_l=0$ and the ideal of $C_i$ is generated by some $x_j$ for $j=1,\dots, k$.
Using condition $(i)$, we see by induction that $\pi^{-1}_n(|Z|)\cup \pi^{-1}_n(|S|)=|Z|_n\cup H_n$. Proposition  \ref{propositionprincipale} is thus a consequence of
\begin{proposition}\label{casparticulier}
Let
$$
\xymatrix{
\pi_n:X_n\ar[r]^-{p_{n-1}} & X_{n-1}\ar[r]^-{p_{n-2}} & \cdots  \ar[r]& X_1\ar[r]^-{p_0} & X_0=X
}
$$
be a sequence of blow-up satisfying $(i)$,$(ii)$ and $(iii)$. For every irreducible component $E$ of $\pi^{-1}_n(|Z|)$, we have
\begin{equation}\label{inegalitevoulue}
v_E(S)\leq (\deg S) v_E(f)
\end{equation}
\end{proposition}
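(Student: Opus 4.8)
For an irreducible closed subset $W$ of some $X_i$ and a divisor $g$ (resp. a function $g$, via $\divi g$), write $\ord_W(g)$ for the multiplicity of $g$ at the generic point of $W$; if $E_i$ denotes the exceptional divisor of $p_i$, then $v_{E_i}(g)=\ord_{C_i}(\pi_i^{\ast}g)$. Write $|S|=S_1\cup\dots\cup S_m$ and $r_j:=r_{S_j}(\mathcal{M})\in\mathds{Q}_{\geq 0}$, so that $S=\sum_j r_jS_j$ and $\deg S=\sum_j r_j$. Since $v_E(S)=\sum_j r_jv_E(S_j)$ and the $r_j$ are non-negative, \eqref{inegalitevoulue} follows once one proves
$$v_E(S_j)\leq v_E(f)\qquad(1\leq j\leq m)$$
for every irreducible component $E$ of $\pi_n^{-1}(|Z|)$. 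Such an $E$ is either the strict transform of a component $Z_0$ of $|Z|$, or (the strict transform in $X_n$ of) the exceptional divisor of one of the $p_i$ --- the latter does lie over $|Z|$ because $C_i\subseteq|Z|_i$ by $(i)$. In the first case this is clear: $v_E(S_j)=\ord_{Z_0}(S_j)\leq 1$ since $S_j$ is reduced and irreducible, whereas $v_E(f)=\ord_{Z_0}(\divi f)\geq 1$ since $Z_0\subseteq|Z|=\Supp\divi f$. So everything reduces to the case $E=E_i$, and I would argue by induction on $i$.

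Assume $v_{E_k}(S_j)\leq v_{E_k}(f)$ for all $k<i$ (the base case being the one just treated). It suffices to prove $\ord_{C_i}(\pi_i^{\ast}S_j)\leq\ord_{C_i}(\pi_i^{\ast}\divi f)$. The crucial point is that, because $H_0=|S|$ and $H_{k+1}=p_k^{-1}(H_k)\cup p_k^{-1}(C_k)$, the normal crossing divisor $H_i$ contains the strict transform of $S_j$ and the strict transform of every exceptional divisor $E_k$ with $k<i$. Hence in the decomposition
$$\pi_i^{\ast}S_j=\widetilde{S_j}+\sum_{k<i}v_{E_k}(S_j)\,\widetilde{E_k}$$
(strict transforms in $X_i$) every prime component is a \emph{smooth} component of $H_i$, so by $(iii)$ --- which says $C_i$ has normal crossings with $H_i$ --- each such component $F$ satisfies $\ord_{C_i}(F)\in\{0,1\}$. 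Expanding similarly $\pi_i^{\ast}\divi f=\sum_l a_l\widetilde{Z_l}+\sum_{k<i}v_{E_k}(f)\,\widetilde{E_k}$, with integers $a_l\geq 1$ on the strict transforms of the components $Z_l$ of $|Z|$ (these $\widetilde{Z_l}$ need not be smooth, so their $\ord_{C_i}$ is kept as is), condition $(i)$ gives $C_i\subseteq\widetilde{Z_{l_0}}$ for some $l_0$, whence $\sum_l a_l\ord_{C_i}(\widetilde{Z_l})\geq a_{l_0}\geq 1$. One then compares term by term: $\ord_{C_i}(\widetilde{S_j})\leq 1\leq\sum_l a_l\ord_{C_i}(\widetilde{Z_l})$, while for $k<i$ one has $v_{E_k}(S_j)\leq v_{E_k}(f)$ by the induction hypothesis and $\ord_{C_i}(\widetilde{E_k})\geq 0$. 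Summing gives $\ord_{C_i}(\pi_i^{\ast}S_j)\leq\ord_{C_i}(\pi_i^{\ast}\divi f)$ and closes the induction. (Condition $(ii)$ enters only to ensure that each $p_i$ is a genuine blow-up, so that the $E_i$ are honest new divisors.)

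The single inequality above that is not purely formal is $\ord_{C_i}(\pi_i^{\ast}\divi f)\geq 1$, i.e. that the total transform of $\divi f$ really vanishes along the center $C_i$; this is precisely what $(i)$ provides. I therefore expect the main obstacle to be organizational rather than substantive: verifying that $H_i$ does contain $\widetilde{S_j}$ and all the $\widetilde{E_k}$ (so that $(iii)$ applies to each of them), that the coefficient of a strict transform is unchanged under pull-back, and that the induction is well-founded because every new exceptional divisor sits over a center contained in $|Z|_i$ and is assembled only from divisors already sitting inside $H_i$. Granting this bookkeeping, \eqref{inegalitevoulue}, and hence \ref{propositionprincipale}, follows.
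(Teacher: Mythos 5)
Your argument is correct and follows essentially the same route as the paper: induction on the length of the blow-up sequence, expanding total transforms into strict transforms plus exceptional components, with condition $(iii)$ giving multiplicity $\leq 1$ of each component of $H_i$ along $C_i$ and condition $(i)$ giving multiplicity $\geq 1$ of $\pi_i^{\ast}\divi f$ along $C_i$. The only (harmless, and in fact slightly cleaner) difference is that you establish the sharper unweighted inequality $v_E(S_j)\leq v_E(f)$ for each component $S_j$ of $|S|$ and recover \eqref{inegalitevoulue} by summing with the weights $r_j$, whereas the paper carries the weighted divisor $S$ through the induction and absorbs $\sum_i r_{S_i}(\mathcal{M})\epsilon_i\leq\deg S$ in its final chain of inequalities.
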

\begin{proof}
Let $S_1, \dots, S_m$ be the irreducible components of $|S|$ and let $Z_1, \dots, Z_{m^{\prime}}$ be the irreducible components of $Z$. Note that some $Z_i$ can be in  $|S|$. We define $a_i=v_{Z_i}(f)>0$ and let $Z_{ji}$ (resp. $S_{ji}$) be the strict transform of $Z_j$ (resp. $S_j$) in $X_i$.\\ \indent
We argue by induction on  $n$. If $n=0$, $E$ is one of the $Z_i$ and then \eqref{inegalitevoulue} is obvious. We suppose that \eqref{inegalitevoulue} is true for a composite of $n$ blow-up and we prove that \eqref{inegalitevoulue}  is true for a composite of $n+1$ blow-up. \\ \indent
Let  $\mathcal{C}_n$ be the set of irreducible components of $$\displaystyle{\bigcup_{i=0}^{n-1} (p_{n-1}\cdots p_i)^{-1}(C_i)}$$
Each element $E\in \mathcal{C}_n$ will be endowed with its reduced structure. Condition $(i)$ implies that the irreducible components of  $\pi_n^{\ast}Z$ are the $Z_{in}$ and the elements of $\mathcal{C}_n$. Condition $(ii)$ implies that none of the $Z_{in}$ belongs to $\mathcal{C}_n$. Thus, we have 
$$
\pi_n^{\ast}Z=\divi f\pi_n=a_1 Z_{1n}+\cdots +a_{m^{\prime}}Z_{m^{\prime}n}+\displaystyle{\sum_{E\in \mathcal{C}_n}} v_E(f) E
$$
On the other hand, we have
$$
\pi_n^{\ast}S=r_{S_1}( \mathcal{M})  S_{1n}+\cdots+ r_{S_m}( \mathcal{M})  S_{mn}+\displaystyle{\sum_{E\in \mathcal{C}_n}} v_{E}(S) E
$$
Let us consider the last blow-up
$p_{n}: X_{n+1}\longrightarrow X_{n}$. Let us denote by $P$ the exceptionnal divisor of $p_{n}$ and let $E_{n+1}$ be the strict transform of $E\in \mathcal{C}_n$ in $X_{n+1}$. 
We have
$$
p_{n}^{\ast} Z_{in}=Z_{in+1}+\alpha_i P \text{\quad \quad with $\alpha_i\in \mathds{N}$}
$$
Since 
$$
H_n=\displaystyle{\bigcup_{j=0}^{m}} S_{jn}  \cup \displaystyle{\bigcup_{E\in \mathcal{C}_n}} E
$$
we deduce from condition $(iii)$ and smoothness of $C_n$ that
$$
p_{n}^{\ast} E=E_{n+1}+ \epsilon_E P
\text{\quad \quad with $\epsilon_E\in \{0,1\}$}
$$
and 
$$
p_{n}^{\ast} S_{in}=S_{in+1}+ \epsilon_i P  \text{\quad \quad with $\epsilon_i\in \{0,1\}$}
$$
Hence, we have
$$
\pi_{n}^{\ast}Z=\sum  a_iZ_{in+1}+\displaystyle{\sum_{E\in \mathcal{C}_n}} v_E(f) E_{n+1}+(\sum a_i\alpha_i+\sum_{E\in \mathcal{C}_n} \epsilon_E v_E(f))P
$$
and
$$
\pi_{n}^{\ast}S=\sum r_{S_i}( \mathcal{M})  S_{in+1}+\displaystyle{\sum_{E\in \mathcal{C}_n}} v_E(S) E_{n+1}+ (\sum r_{S_i}( \mathcal{M}) \epsilon_i+\sum_{E\in \mathcal{C}_n} \epsilon_E v_{E}(S))P
$$
Formula \eqref{inegalitevoulue} is true for the $Z_{in+1}$. By induction hypothesis, formula \eqref{inegalitevoulue} is true for $E_{n+1}$, where $E\in \mathcal{C}_n$. We are left to prove that \eqref{inegalitevoulue}  is true for $P$. Conditions $(i)$ and $(ii)$ imply that one of the $\alpha_i$ is non zero, so
\begin{align*}
(\deg S)\left(\sum a_i\alpha_i+\sum \epsilon_E v_E(f)\right) &\geq (\deg S)+(\deg S)\sum \epsilon_E v_E(f)  \\ 
&\geq \sum r_{S_i}( \mathcal{M})  \epsilon_i+\sum \epsilon_E (\deg S)v_E(f) \\ 
&\geq \sum r_{S_i}( \mathcal{M}) \epsilon_i+\sum \epsilon_E v_E(S)
\end{align*}
\end{proof}

\section{Duality}
We prove theorem \ref{theoremprincipal-1} $(i)$. Let us denote by $\mathds{D}$ the duality functor for $\mathcal{D}$-modules. There is a canonical comparison morphism
\begin{equation}\label{comparaison}
\xymatrix{
\mathds{D}(\mathcal{M}\otimes f^{+}N)\ar[r]& \mathds{D}\mathcal{M}  \otimes f^{+}\mathds{D} N
}
\end{equation}
On a punctured neighbourhood of $0\in \mathds{C}$, the module $N$ is isomorphic to a finite sum  of copies of the trivial connection. Thus, there is a neighbourhood $U$  of $Z$ such that the restriction of \eqref{comparaison}  to $U\setminus Z$ is an isomorphism. Hence, the cone of \eqref{comparaison} has support in $Z$. We deduce that
$$
\xymatrix{
(\mathds{D}(\mathcal{M}\otimes f^{+}N))(\ast Z)\ar[r]^-{\sim}&\mathds{D} \mathcal{M} \otimes f^{+}((\mathds{D}N)(\ast 0))
}
$$
We have $(\mathds{D}N)(\ast 0)\simeq N^{\ast}$, where $\ast$ is the duality functor for meromorphic connection. Note that $\ast$ is a slope preserving involution. Since  nearby cycles are insensitive to localization and commute with duality for $\mathcal{D}$-modules, we have 
$$
\psi_f(\mathds{D}\mathcal{M}  \otimes f^{+}N^{\ast})\simeq \mathds{D}(\psi_f(\mathcal{M}\otimes f^{+}N))
$$
and theorem \ref{theoremprincipal-1} $(i)$ is proved.
\section{Regularity and nearby cycles}
The aim of this section is to prove theorem \ref{comparaisonreg}. 
\subsection{} We will use the following
\begin{lemme}\label{lemmeevitement}
Let  $F$ be a germ of closed analytic subspace at the origin $0\in \mathds{C}^{n}$. Let $Y_1, \dots, Y_k$ be irreducible closed analytic subspaces of $\mathds{C}^{n}$ containing 0 and such that $F\cap Y_i$ is a strict closed subset of $Y_i$ for every  $i$. Then, there exists a germ of hypersurface $Z$ at the origin containing $F$ and such that $Z\cap Y_i$ has codimension 1 in $Y_i$ for every  $i$. 
\end{lemme}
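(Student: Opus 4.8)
The plan is to reduce the statement to commutative algebra in the Noetherian regular local ring $R:=\mathcal{O}_{\mathds{C}^{n},0}$, in which reduced germs of closed analytic subspaces through $0$ correspond to radical ideals of $R$ by the Rückert Nullstellensatz. One may assume $0\in F$, the case where $F$ is empty as a germ being immediate. Write $\mathfrak{a}\subseteq\mathfrak{m}_{R}$ for the radical ideal of the reduced germ underlying $F$, and $\mathfrak{p}_{i}\subseteq R$ for the prime ideal of the germ of $Y_{i}$ at $0$. The hypothesis that $F\cap Y_{i}$ is a strict closed subset of $Y_{i}$ translates, via the Nullstellensatz and the irreducibility of $Y_{i}$, into the statement that $\mathfrak{a}\not\subseteq\mathfrak{p}_{i}$ for every $i$.

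The key step is then a direct application of prime avoidance: since $\mathfrak{a}$ is contained in none of the finitely many primes $\mathfrak{p}_{1},\dots,\mathfrak{p}_{k}$, it is not contained in their union, so there is $g\in\mathfrak{a}$ with $g\notin\mathfrak{p}_{i}$ for all $i$. As $g\in\mathfrak{a}\subseteq\mathfrak{m}_{R}$ the element $g$ is a non-unit, and $g\neq 0$ because it avoids the $\mathfrak{p}_{i}$; hence $Z:=\{g=0\}$ is a genuine germ of hypersurface at $0$. Since $g$ vanishes on $F$ we get $F\subseteq Z$. Finally, for each $i$ the condition $g\notin\mathfrak{p}_{i}$ means that $Z$ does not contain $Y_{i}$, so $Z\cap Y_{i}$ is a proper analytic subset of the irreducible germ $Y_{i}$; it is non-empty since $g(0)=0$, and by Krull's principal ideal theorem every irreducible component of $Z\cap Y_{i}$ has codimension exactly one in $Y_{i}$. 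This is the required hypersurface.

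I expect the only point deserving real care to be the translation carried out in the first paragraph, namely that ``$F\cap Y_{i}$ is a strict closed subset of $Y_{i}$'' is equivalent to ``$\mathfrak{a}\not\subseteq\mathfrak{p}_{i}$'', i.e. that $F$ does not contain the germ of $Y_{i}$; this rests on the irreducibility of $Y_{i}$ together with the dictionary between reduced germs and radical ideals. Once this is in place, both the extraction of $g$ (prime avoidance) and the codimension statement (Hauptidealsatz on the irreducible $Y_{i}$) are entirely standard, and there is no further obstacle. An alternative to invoking prime avoidance directly is to induct on $k$, enlarging a hypersurface already built for $Y_{1},\dots,Y_{k-1}$ by a generic combination with an element of $\mathfrak{a}$ not vanishing identically on $Y_{k}$; this amounts to the same computation.
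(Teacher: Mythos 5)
Your argument is correct and is essentially the paper's own proof: translate the hypothesis into $\mathcal{I}_F\nsubseteq\mathcal{I}_{Y_i}$ for the prime ideals of the irreducible germs $Y_i$, apply prime avoidance to pick $g\in\mathcal{I}_F$ outside every $\mathcal{I}_{Y_i}$, and let $Z=\{g=0\}$. The extra details you supply (that $g$ is a nonzero non-unit, and the Hauptidealsatz for the codimension statement) are exactly the routine verifications the paper leaves implicit.
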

\begin{proof}
Denote by $\mathcal{I}_{F}$ (resp. $\mathcal{I}_{Y_i}$) the ideal sheaf of $F$ (resp. $Y_i$). By irreducibility, $\mathcal{I}_{Y_i, 0}$ is a prime ideal in  $\mathcal{O}_{\mathds{C}^{n},0}$. The hypothesis say $\mathcal{I}_{F}\nsubseteq \mathcal{I}_{Y_i}$ for every $i$. From \cite[1.B]{Mat2}, we deduce
$$\mathcal{I}_{F}\nsubseteq \bigcup_i\mathcal{I}_{Y_i}$$
Any function $f\in \mathcal{I}_{F}$ not in  $\bigcup_i\mathcal{I}_{Y_i}$ defines a hypersurface as wanted.
\end{proof}
\subsection{}\label{lemmemeb}
We say that a holonomic module $\mathcal{M}$ is \textit{smooth} if the support $\Supp\mathcal{M}$ of $\mathcal{M}$ is smooth equidimensional and if the characteristic variety of $\mathcal{M}$ is equal to the conormal of $\Supp \mathcal{M}$ in $X$. We denote by $\Sing  \mathcal{M}$ the complement of the smooth locus of $\mathcal{M}$. It is a strict closed subset of $\Supp \mathcal{M}$.\\ \indent
Let $x\in X$ and let us define $F$ as the union of $\Sing \mathcal{M}$ with the irreducible components of $\Supp \mathcal{M}$ passing through $x$ which are not of maximal dimension. Define $Y_1, \dots, Y_k$ to be  the irreducible components of  $\Supp \mathcal{M}$ of maximal dimension passing through $x$.  From \ref{lemmeevitement}, one can find a hypersurface $Z$ passing through $x$ such that
\begin{enumerate}
\item  $Z\cap \Supp \mathcal{M}$ has codimension $1$ in $\Supp \mathcal{M}$.
\item The cohomology modules of $\mathcal{H}^{k}\mathcal{M}$ are smooth away from $Z$.
\item $\dim \Supp R\Gamma_{[Z]}\mathcal{M}<\dim \Supp \mathcal{M}$.
\end{enumerate}

\subsection{}\label{implicationdirecte} The direct implication of theorem \ref{comparaisonreg} is a consequence of the preservation of regularity by inverse image and the following
\begin{proposition}\label{premierinclusion}
We have $\mathcal{D}_{\hol}^b(X)_{\reg}\subset \mathcal{D}_{\hol}^b(X)_{\{0\}}$.
\end{proposition}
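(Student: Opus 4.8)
The plan is to unwind the statement: we must show that for a regular holonomic complex $\mathcal{M}$, every $f\in\mathcal{O}_X$ and every germ $N$ of meromorphic connection at $0$ with slopes $>0$, one has $\psi_f(\mathcal{M}\otimes f^{+}N)\simeq 0$. By \eqref{slopecomplex} and the stability of regularity under $\mathcal{H}^{k}$ we reduce immediately to the case where $\mathcal{M}$ is a single regular holonomic module. The assertion being local on $X$, I argue by induction on $\dim X$, and for fixed $X$ by induction on $d:=\dim\Supp\mathcal{M}$. If $d=0$, then locally $\mathcal{M}\simeq i_{x+}V$ for a point $x$ and a finite-dimensional $V$; since $N$ is localized at $0$, the restriction $i_x^{+}f^{+}N$ vanishes whenever $f(x)=0$, so $\mathcal{M}\otimes f^{+}N\simeq 0$ there, while if $f(x)\neq 0$ the function $f$ does not vanish near $x$ and $\psi_f$ vanishes. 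If $0<d<\dim X$, I follow the dévissage of \ref{reduction} verbatim: writing $Y=\Supp\mathcal{M}$, taking a resolution $\pi:\tilde Y\to Y$ and $p=i\pi$, the cone $\mathcal{C}$ of $p_{+}p^{\dag}\mathcal{M}\to\mathcal{M}$ is regular and supported on $\Sing Y$ (hence treated by the induction on $d$), while $p^{\dag}\mathcal{M}$ is a regular complex on the lower-dimensional manifold $\tilde Y$, so by the induction on $\dim X$ its nearby slopes lie in $\{0\}$, whence $\psi_f(p_{+}p^{\dag}\mathcal{M}\otimes f^{+}N)\simeq p_{+}\psi_{fp}(p^{\dag}\mathcal{M}\otimes (fp)^{+}N)\simeq 0$ by the projection formula and compatibility of $\psi$ with proper direct image; the associated triangle then gives the vanishing.

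It remains to treat $d=\dim X$, i.e. $\Supp\mathcal{M}=X$. I choose, as in \ref{lemmemeb}, a hypersurface $Z\supset\Sing\mathcal{M}$ with $\dim\Supp R\Gamma_{[Z]}\mathcal{M}<\dim X$; the local cohomology triangle \eqref{ocholocal}, together with the induction on $d$ applied to the regular complex $R\Gamma_{[Z]}\mathcal{M}$, reduces the problem to $\mathcal{M}(\ast Z)$, which is a regular meromorphic connection. Enlarging $Z$ to $Z\cup|\divi f|$ — the corresponding extra local cohomology term is again supported in dimension $<\dim X$ and regular, hence handled by the induction on $d$ — I may assume that $\mathcal{M}$ is a regular meromorphic connection whose pole locus $D$ contains $|\divi f|$. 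Resolving the singularities of $D$ by a proper birational $\pi:\tilde X\to X$ which is an isomorphism away from $D$, the module $\pi^{+}\mathcal{M}$ is a regular meromorphic connection with normal crossing pole locus $\pi^{-1}(D)$, and exactly as in the meromorphic-connection step of the proof of theorem \ref{theoremprincipal} (using \cite[3.6-4]{Mehbsmf}, so that $\pi_{+}\pi^{+}\mathcal{M}$ is localized along $D$ and the cone of $\pi_{+}\pi^{+}\mathcal{M}\to\mathcal{M}$ is supported on $D$, forcing it to vanish) the adjunction $\pi_{+}\pi^{+}\mathcal{M}\to\mathcal{M}$ is an isomorphism. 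Since $\pi$ is proper, we are left to prove $\psi_{f\pi}(\pi^{+}\mathcal{M}\otimes (f\pi)^{+}N)\simeq 0$.

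Now $\pi^{+}\mathcal{M}$ has good formal structure (being regular with normal crossing poles, all exponential factors $\varphi$ in \eqref{decomposition} vanish), we have $|\divi(f\pi)|\subset\pi^{-1}(D)$, and the highest generic slope of $\pi^{+}\mathcal{M}$ along every irreducible component $E$ of $|\divi(f\pi)|$ is $r_E(\pi^{+}\mathcal{M})=0$, so that $r_E(\pi^{+}\mathcal{M})\leq r\, v_E(f\pi)$ for every $r\geq 0$. Applying \ref{lemmeannulation} with $r=0$ (or with any $r\geq 0$ smaller than the slopes of $N$) then gives $\psi_{f\pi}(\pi^{+}\mathcal{M}\otimes(f\pi)^{+}N)\simeq 0$ for every germ $N$ of meromorphic connection at $0$ with slopes $>0$, which is exactly the vanishing required.

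The substantive work is therefore the organization of the dévissage so as to land within the hypotheses of \ref{lemmeannulation}: reducing by induction on the support dimension first to a meromorphic connection and then to one whose pole locus contains $|\divi f|$, and checking along the way that every object produced (cones of adjunctions, local cohomology, inverse and proper direct images) stays regular. I expect this bookkeeping — in particular keeping the two nested inductions consistent and ensuring the extra localization along $|\divi f|$ is legitimately covered by the induction on $d$ — to be the only real obstacle; once the reduction is done, the regularity of $\mathcal{M}$, i.e. the vanishing of all its generic slopes, makes the criterion \ref{lemmeannulation} apply for free and for every positive $r$.
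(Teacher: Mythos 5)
Your proof is correct and follows essentially the same route as the paper's: induction on $\dim X$ together with the dévissage of \ref{reduction} to reduce to a regular meromorphic connection, then embedded resolution to a normal crossing pole locus and the vanishing criterion \ref{lemmeannulation} (with $r=0$). Your explicit step of enlarging the pole locus so that $|\divi f|\subset D$, absorbing the extra local cohomology term into the induction on $\dim\Supp\mathcal{M}$, is a detail the paper's terse proof leaves implicit but is exactly what is needed to meet the hypothesis $|Z|\subset D$ of \ref{lemmeannulation}.
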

\begin{proof}
Take $\mathcal{M}\in\mathcal{D}_{\hol}^b(X)_{\reg}$. We argue by induction on  $\dim X$. The case where $X$ is a point is trivial. By arguing on $\dim \Supp \mathcal{M}$ as in  \ref{reduction}, we are left to prove \ref{premierinclusion} in the case where $\mathcal{M}$ is a regular meromorphic connection. Let $D$ be the pole locus of $\mathcal{M}$. Take $f\in \mathcal{O}_X$ and let $N$ with slope $>0$. To prove
$$
\psi_{f}(\mathcal{M}\otimes f^{+}N)\simeq 0
$$
one can suppose using embedded desingularization that $D+\divi f$ is a normal crossing divisor. We then conclude with \ref{lemmeannulation}.
\end{proof}
\subsection{} 
To prove the reverse implication of theorem \ref{comparaisonreg}, we argue by induction on  $\dim X\geq 1$. The case of curves follows from
\ref{propdim1}. We suppose that $\dim X\geq 2$ and we take
$\mathcal{M}\in \mathcal{D}_{\hol}^b(X)_{\{0\}}$. We argue by induction on $\dim \Supp \mathcal{M}$. The case where $\Supp \mathcal{M}$ is punctual is trivial.\\ \indent
Suppose that $0<\dim\Supp \mathcal{M}<\dim X$. Since $\Supp \mathcal{M}$ is a strict closed subset of $X$, one can always locally write $X=X^{\prime}\times D$ where $D$is the unit disc of  $\mathds{C}$ and where the projection  $X^{\prime}\times D\longrightarrow X^{\prime}$ is finite on $\Supp \mathcal{M}$. Let  $i:X^{\prime}\times D\longrightarrow X^{\prime}\times \mathds{P}^1$ be the canonical immersion. There is a commutative diagram
\begin{equation}\label{diagcom}
\xymatrix{
\Supp \mathcal{M} \ar[r] \ar[rd] &X^{\prime}\times \mathds{P}^1\ar[d]^-{p}\\
& X^{\prime}
}
\end{equation}
The oblique arrow of \eqref{diagcom} is finite, and $p$ is proper. So the horizontal arrow is proper. Thus, $\Supp \mathcal{M}$ is a closed subset in $X^{\prime}\times \mathds{P}^1$. Hence,  $\mathcal{M}$ can be extended by $0$ to $X^{\prime}\times \mathds{P}^1$. We still denote by $\mathcal{M}$ this extension. It is an object of $\mathcal{D}_{\hol}^b(X^{\prime}\times \mathds{P}^1)_{\{0\}}$ and we have to show that it is regular.\\ \indent
Let  $Z$ be a divisor in $X^{\prime}$ given by the equation $f=0$ and let $\rho: Y\longrightarrow X^{\prime}$ be a finite morphism. Since $p$ is smooth, the analytic space $Y^{\prime}$ making the following diagram 
$$
\xymatrix{
Y^{\prime} \ar[r]^-{\rho^{\prime}} \ar[d]_-{p^{\prime}} &X^{\prime}\times \mathds{P}^1\ar[d]^-{p}\\
Y \ar[r]_-{\rho} & X^{\prime}
}
$$
cartesian is smooth. Moreover $\rho^{\prime}$ is finite. By base change \cite[1.7.3]{HTT}, projection formula and compatibility of $\psi$ with proper direct image, we have for every germ $N$ of meromorphic connection with slope $>0$
\begin{align*}
\psi_{f}(\rho^{+}p_+\mathcal{M}\otimes f^{+}N)&\simeq \psi_{f}(p_+^{\prime}\rho^{\prime +}\mathcal{M}\otimes f^{+}N)\\
&\simeq 
\psi_{f}(p_+^{\prime}(\rho^{\prime +}\mathcal{M}\otimes (fp^{\prime})^{+}N))\\
&\simeq 
p_+^{\prime}\psi_{fp^{\prime}}(\rho^{\prime +}\mathcal{M}\otimes (fp^{\prime})^{+}N)\\
&\simeq 0
\end{align*}
By induction hypothesis $p_+\mathcal{M}$ is regular. Let $Y_1, \dots, Y_n$ be the irreducible components of $\Supp \mathcal{M}$ with maximal dimension. Since $\Sing  \mathcal{M}\cap Y_i$ is a strict closed subset of $Y_i$ and since a finite morphism preserves dimension, $p(\Sing  \mathcal{M})\cap p(Y_i)$ is a strict closed subset of the irreducible closed set $p(Y_i)$. In a neighbourhood of a given point of $p(\Sing  \mathcal{M})$, one can find from \ref{lemmemeb} a hypersurface $Z$ containing  $p(\Sing  \mathcal{M})$ such that $Z\cap p(Y_i)$ has codimension 1 in  $p(Y_i)$ for every $i$. So  $p^{-1}(Z)$ contains $\Sing  \mathcal{M}$ and 
$$\dim p^{-1}(Z)\cap Y_i=\dim Z \cap p(Y_i)=\dim p(Y_i)-1=\dim Y_i-1$$
Since $\Irr^{\ast}_{Z}$ is compatible with proper direct image \cite[3.6-6]{Mehbsmf}, we have
$$
\Irr^{\ast}_{Z}p_+\mathcal{M}\simeq Rp_{\ast}\Irr^{\ast}_{p^{-1}(Z)}\mathcal{M}\simeq 0
$$
Since $p$ is finite over $\Supp \mathcal{M}$, we have
$$
Rp_{\ast}\Irr^{\ast}_{p^{-1}(Z)}\mathcal{M} \simeq p_{\ast}\Irr^{\ast}_{p^{-1}(Z)}\mathcal{M}
$$
So for every $x\in p^{-1}(Z)$, the germ of  $\Irr^{\ast}_{p^{-1}(Z)}\mathcal{M}$ at $x$ is a direct factor of the complex $(p_{\ast}\Irr^{\ast}_{Z}p_+\mathcal{M})_{p(x)}\simeq 0$. Thus  $\Irr^{\ast}_{p^{-1}(Z)}\mathcal{M}\simeq 0$. From \cite[4.3-17]{Mehbsmf}, We deduce that $\mathcal{M}(\ast p^{-1}(Z))$ is regular. \\ \indent
To show that $\mathcal{M}$ is regular, we are left to prove that $R\Gamma_{[p^{-1}(Z)]}\mathcal{M}$ is regular. From \ref{implicationdirecte}, the nearby slopes of all quasi-finite inverse images of  $\mathcal{M}(\ast p^{-1}(Z))$ are contained in $\{0\}$. Thus, this is also the case for $R\Gamma_{[p^{-1}(Z)]}\mathcal{M}$. By construction of $Z$, $$\dim \Supp R\Gamma_{[p^{-1}(Z)]}\mathcal{M} < \dim \Supp  \mathcal{M}$$ 
We conclude by applying the induction hypothesis to  $R\Gamma_{[p^{-1}(Z)]}\mathcal{M}$. \\ \indent
Let us suppose that $\Supp \mathcal{M}$ has dimension $\dim X$, and let $Z$ be a hypersurface as in \ref{lemmemeb}. Then $\mathcal{M}(\ast Z)$ is a meromorphic connection with poles along $Z$. Let us show that  $\mathcal{M}(\ast Z)$ is regular. By \cite[4.3-17]{Mehbsmf}, it is enough to prove regularity generically along $Z$. Hence, one can suppose that $Z$ is smooth. By Malgrange theorem \cite{Reseaucan}, one can suppose that $Z$ is smooth and that $\mathcal{M}(\ast Z)$ has good formal structure along $Z$. Let $(x_1,\dots, x_n, t)$ be coordinates centred at $0\in Z$ such that $Z$ is given by $t=0$ and let 
 $\rho: (x,u)\longrightarrow (x,u^{p})$ be as in \ref{bonnedecompositionformelle} for $\mathcal{M}(\ast Z)$. Let
$ \mathcal{E}^{g(x,u)/u^{k}}\otimes \mathcal{R}$ be a factor of  $\rho^{+}(\hat{\mathcal{M}}_0(\ast Z))$
where $g(0,0)\neq 0$ and where $\mathcal{R}$ is a regular meromorphic connection with poles along $Z$. For a choice of $k$-th root in a neighbourhood of $g(0,0)$, we have
$$
\psi_{u/\sqrt[k]{g}}(\rho^{+}\mathcal{M}\otimes (u/\sqrt[k]{g})^{+}\mathcal{E}^{-1/u^{k}})\simeq 0
$$
Since nearby cycles commute with formalization, we deduce
$$
\psi_{u}(\rho^{+}(\hat{\mathcal{M}}_0(\ast Z))\otimes \mathcal{E}^{-g/u^{k}})\simeq \psi_{u}(\rho^{+}\hat{\mathcal{M}}_0\otimes \mathcal{E}^{-g/u^{k}})\simeq 0
$$
Thus $\psi_u \mathcal{R}\simeq 0$, so $\mathcal{R}\simeq 0$. Hence, the only possibly non zero factor of $\rho^{+}(\hat{\mathcal{M}}_0(\ast Z))$ is the regular factor. So $\mathcal{M}(\ast Z)$ is regular. We obtain that $\mathcal{M}$ is regular by applying the induction hypothesis to $R\Gamma_{[Z]}\mathcal{M}$.

\section{Slopes and irregular periods}

\subsection{}\label{DRrapide}
Let $X$ be a smooth complex manifold and let $D$ be a normal crossing divisor in $X$. Define $U:= X\setminus D$ and let $j:U \longrightarrow X$ be the canonical inclusion. Let $\mathcal{M}$ be a meromorphic connexoin on  $X$ with poles along $D$.\\ \indent
We denote by $p:\tilde{X}\longrightarrow X$ the real blow-up of $X$ along $D$. Let $\mathcal{A}_{\tilde{X}}^{<D}$ be the sheaf \cite[II]{Sabbahdim} of differentiable functions on $\tilde{X}$ whose restriction to $X$ are holomorphic and whose asymptotic development along $p^{-1}(D)$ are zero. We define the  \textit{de Rham complex with rapid decay} by
$$
\DR^{<D}_{\tilde{X}}\mathcal{M}:=\mathcal{A}_{\tilde{X}}^{<D}\otimes_{p^{-1}\mathcal{O}_{X}}p^{-1}\DR_{X}\mathcal{M}
$$
\subsection{}
With the notations in \ref{DRrapide}, if $\mathcal{M}$ has good formal structure along $D$, we define \cite[Prop 2]{HienInv}
$$
H^{\rd}_k(X,\mathcal{M}):=H^{2d-k}(\tilde{X},\DR^{<D}_{\tilde{X}}\mathcal{M})
$$
The left-hand side is the space of \textit{cycles with rapid decay for $\mathcal{M}$}. For a topological description justifying the terminology, we refer to \cite[5.1]{HienInv}.
\subsection{Proof of theorem \ref{GMirr}}
We denote by  $j:U\longrightarrow X$ the canonical immersion, $d:=\dim X$ and $\Sl_0(\mathcal{H}^{k}f_+\mathcal{E})$ the slopes of $\mathcal{H}^{k}f_+\mathcal{E}$ at 0. We will also use the letter $f$ for the restriction of $f$ to $U$. From \cite[4.7.2]{HTT}, we have a canonical identification
\begin{equation}\label{isoan}
\xymatrix{
(f_+\mathcal{E})^{\an}\simeq  (f_{+}(j_+\mathcal{E}))^{\an}\ar[r]^-{\sim} & f_{+}^{\an}(j_+\mathcal{E})^{\an}
}
\end{equation}
We deduce 
$$
\Sl_0(\mathcal{H}^{k}f_+\mathcal{E})=\Sl_0(\mathcal{H}^{k}f_{+}^{\an}(j_+\mathcal{E})^{\an})
$$
Let $x$ be a local coordinate on $S$ centred at the origin. From \ref{propdim1}, we have 
$$
\Sl_0(\mathcal{H}^{k}f_{+}^{\an}(j_+\mathcal{E})^{\an})=\Sl_x^{\nb}(\mathcal{H}^{k}f_{+}^{\an}(j_+\mathcal{E})^{\an})
$$
Since $\Sl_x^{\nb}(\mathcal{H}^{k}f_{+}^{\an}(j_+\mathcal{E})^{\an})\subset \Sl_x^{\nb}(f_{+}^{\an}(j_+\mathcal{E})^{\an})$, we deduce from theorem \ref{theoremprincipalraffiné} and theorem \ref{theoremprincipal-1} 
$$
\Sl_0(\mathcal{H}^{k}f_+\mathcal{E})\subset \Sl_{f(x)}^{\nb}((j_+\mathcal{E})^{\an})\subset [0,r_1+\cdots +r_n]
$$
We are thus left to relate $\Sol(\mathcal{H}^{k}f_{+}^{\an}(j_+\mathcal{E})^{\an})$ to the periods of $\mathcal{E}_{t}$, for $t\neq 0$ close enough to $0$. Such a relation appears for a special type of rank 1 connections in \cite{HR}. We prove more generally the following
\begin{proposition}\label{derniereprop}
For every $k$, we have a canonical isomorphism
\begin{equation}\label{isocannonique}
\xymatrix{
R^{k}f_{\ast}^{\an}\Sol (j_+\mathcal{E})^{\an}\ar[r]^-{\sim}& R^{k}(f^{\an}p)_{\ast}\DR^{<D}_{\tilde{X}}(j_+\mathcal{E}^{\ast})^{\an}
}
\end{equation}
For $t\neq 0$ close enough to 0, the fiber of the right-hand side of \eqref{isocannonique} at $t$ is canonically isomorphic to $H_{2d-2-k}^{\rd}(U_t, \mathcal{E}^{\ast}_t)$. 
\end{proposition}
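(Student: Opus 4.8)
The plan is to derive both assertions from the irregular comparison theorem identifying the solution complex of $j_+\mathcal{E}$ with a rapid decay de Rham complex, together with a proper base change computation on the fibres of $f^{\an}p$.

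First, over $X$ I would use the canonical identification
\[
\Sol_X(j_+\mathcal{E})^{\an}\;\simeq\; Rp_*\,\DR^{<D}_{\widetilde X}(j_+\mathcal{E}^{\ast})^{\an},
\]
which is available because $\mathcal{E}$ has good formal structure along $D$, one of the hypotheses of theorem \ref{GMirr}. This is the sheaf-level form of Hien's duality theorem \cite{HienInv}; equivalently it follows from the comparison theorems for the moderate growth and rapid decay de Rham complexes of good meromorphic connections, combined with Poincaré--Verdier duality on the $d$-dimensional manifold $X$. As a consistency check, over $U$ both sides reduce to the local system $\Sol_U(\mathcal{E})^{\an}=\DR_U(\mathcal{E}^{\ast})^{\an}$ placed in degree $0$. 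Since $f$ is proper, so is $f^{\an}$, and $p$ is proper because it is a real blow-up; hence $f^{\an}p$ is proper, $Rf^{\an}_*Rp_*=R(f^{\an}p)_*$, and applying $Rf^{\an}_*$ and taking $\mathcal{H}^k$ yields \eqref{isocannonique}.

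Next, for the fibre of the right-hand side, fix $t\neq 0$ close enough to $0$ that $X_t$ is smooth, $D_t:=D\cap X_t$ is a normal crossing divisor in $X_t$, and $X_t$ is non characteristic at infinity for $\mathcal{E}$; these conditions hold for all $t\neq 0$ near $0$ by the hypotheses of theorem \ref{GMirr}. Since $f^{\an}p$ is proper, proper base change gives
\[
\big(R^k(f^{\an}p)_*\DR^{<D}_{\widetilde X}(j_+\mathcal{E}^{\ast})^{\an}\big)_t\;\simeq\; H^k\!\big((f^{\an}p)^{-1}(t),\ \DR^{<D}_{\widetilde X}(j_+\mathcal{E}^{\ast})^{\an}|_{(f^{\an}p)^{-1}(t)}\big).
\]
I would then perform three identifications. $(a)$ Transversality of $X_t$ to $D$ shows, by a local computation along $D$, that $(f^{\an}p)^{-1}(t)=p^{-1}(X_t^{\an})$ is the real blow-up $\widetilde X_t$ of $X_t$ along $D_t$. $(b)$ Since $X_t$ is non characteristic for $j_+\mathcal{E}^{\ast}$, Cauchy--Kovalevskaya--Kashiwara identifies the restriction $i_t^{+}(j_+\mathcal{E}^{\ast})$, up to a shift, with the meromorphic connection $j_{t+}(\mathcal{E}^{\ast}_t)$ on $X_t$ with poles along $D_t$, where $\mathcal{E}^{\ast}_t:=\mathcal{E}^{\ast}|_{U_t}=(\mathcal{E}_t)^{\ast}$; moreover the restriction of a good formal decomposition \eqref{decomposition} to the transverse fibre is again one (locally immediate from \ref{bonnedecompositionformelle}, the essential point being that $\divi\varphi\leq 0$ is preserved), so $\mathcal{E}^{\ast}_t$ has good formal structure along $D_t$ and $H^{\rd}_{\bullet}(U_t,\mathcal{E}^{\ast}_t)$ is defined. $(c)$ Because rapid decay functions restrict to rapid decay functions along a transverse fibre and the de Rham complex is compatible with non characteristic restriction (Kashiwara), one gets $\DR^{<D}_{\widetilde X}(j_+\mathcal{E}^{\ast})^{\an}|_{\widetilde X_t}\simeq\DR^{<D_t}_{\widetilde X_t}(j_{t+}\mathcal{E}^{\ast}_t)^{\an}$. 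Combining $(a)$--$(c)$, the fibre equals $H^k(\widetilde X_t,\DR^{<D_t}_{\widetilde X_t}(j_{t+}\mathcal{E}^{\ast}_t)^{\an})$, and since $\dim X_t=d-1$ this is $H^{2(d-1)-(2d-2-k)}(\widetilde X_t,\DR^{<D_t}_{\widetilde X_t}(j_{t+}\mathcal{E}^{\ast}_t)^{\an})=H^{\rd}_{2d-2-k}(U_t,\mathcal{E}^{\ast}_t)$ by the definition of rapid decay homology.

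The step I expect to be the main obstacle is $(b)$--$(c)$: the compatibility of the rapid decay de Rham complex, and of good formal structure, with restriction to the transverse fibre $X_t$, together with the precise interaction between $i_t^{+}$, the meromorphic extension functor and the real blow-up. This is exactly where the non characteristic at infinity hypothesis enters; in the rank one case it is essentially the computation of \cite{HR}. By contrast, the comparison isomorphism of the first step is, by now, a standard part of the irregular Riemann--Hilbert package for good meromorphic connections cited above.
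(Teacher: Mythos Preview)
Your first paragraph is exactly the paper's first step: setting $\mathcal{M}=(j_+\mathcal{E})^{\an}$, the paper invokes Mochizuki's identification $Rp_{\ast}\DR^{<D}_{\tilde X}\mathcal{M}\simeq \DR(\mathcal{M}(!D))$ and the equality $(\mathds{D}\mathcal{M})(\ast D)=\mathcal{M}^{\ast}$ to get $\Sol\mathcal{M}\simeq Rp_{\ast}\DR^{<D}_{\tilde X}\mathcal{M}^{\ast}$, then applies $Rf_{\ast}^{\an}$. So for \eqref{isocannonique} you agree with the paper.

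For the identification of the fibre, however, you and the paper part ways, and the difference is precisely at the point you flag as the obstacle. You try to prove directly that the restriction $(\DR^{<D}_{\tilde X}\mathcal{M}^{\ast})|_{\tilde X_t}$ is quasi-isomorphic to $\DR^{<D_t}_{\tilde X_t}\mathcal{M}^{\ast}_t$; this is your step~(c), and you rightly note it is not a formality: the complex $\DR^{<D}_{\tilde X}\mathcal{M}^{\ast}$ involves $\Omega^{\bullet}_X$ and the sheaf $\mathcal{A}^{<D}_{\tilde X}$ of functions on the ambient blow-up, and restricting to $\tilde X_t$ does not literally produce $\mathcal{A}^{<D_t}_{\tilde X_t}\otimes\Omega^{\bullet}_{X_t}$. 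One needs a Cauchy--Kovalevskaya type argument \emph{with rapid decay coefficients}, which is not covered by the usual Kashiwara non-characteristic statement.

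The paper does not attempt this. Instead it builds a commutative diagram whose top row is \eqref{isocannonique} at $t$ and whose columns descend on the left through proper base change, non-characteristic restriction for $\Sol$, Poincar\'e--Verdier duality, the $\mathcal{D}$-module duality $\Sol\simeq\DR\circ\mathds{D}$, and GAGA; on the right through proper base change and the very restriction map you want; and whose bottom row is Hien's duality pairing on $U_t$. All arrows except the restriction map are isomorphisms by the cited theorems, so the restriction map is \emph{deduced} to be an isomorphism by diagram chase. In other words, the paper turns your obstacle into a consequence of Hien's global duality on the fibre rather than a local computation on $\tilde X$. Your direct route is not wrong in spirit, but as written step~(c) is a genuine gap; the paper's indirect route closes it without any new analysis of rapid-decay restriction.
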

\begin{proof}
Set
$
\mathcal{M}:=(j_+\mathcal{E})^{\an}
$. Since $\mathcal{M}$ has good formal structure, we know from
\cite[5.3.1]{MemoireMochizuki} that
$$
Rp_{\ast}\DR^{<D}_{\tilde{X}}\mathcal{M}\simeq \DR (\mathcal{M}(!D))
$$
where $\mathcal{M}(!D):=\mathds{D}((\mathds{D}\mathcal{M})(\ast D))$. We have $(\mathds{D}\mathcal{M})(\ast D)=\mathcal{M}^{\ast}$ where
$\mathcal{M}^{\ast}$  is the dual connection of $\mathcal{M}$. Thus we have
$$
\Sol\mathcal{M}\simeq \DR\mathds{D}\mathcal{M}\simeq Rp_{\ast}\DR^{<D}_{\tilde{X}}\mathcal{M}^{\ast}
$$
By applying $Rf_{\ast}^{\an}$, we obtain for every  $k$ and every $t\neq 0$ close enough to 0
$$
\xymatrix{
(R^{k}f_{\ast}^{\an}\Sol \mathcal{M})_{t}\ar[d]_-{(1)} \ar[r]^-{\sim}& (R^{k}(f^{\an}p)_{\ast}\DR^{<D}_{\tilde{X}}\mathcal{M}^{\ast})_t     \ar[d]^{(6)} 
 \\
      H^{k}(X_t^{\an},(\Sol\mathcal{M})_t)     \ar[d]_-{(2)}& 
H^{k}(X_t^{\an}, (\DR^{<D}_{\tilde{X}}\mathcal{M}^{\ast})_t)     \ar[dd]^-{(7)}  
           \\
    H^{k}(X_t^{\an},\Sol\mathcal{M}_t) \ar[d]_-{(3)}   &
\\
H^{-k}(X_t^{\an}, \mathds{D}\Sol\mathcal{M}_t)^{\ast} \ar[d]_-{(4)} &      H^{k}(X_t^{\an}, \DR^{<D_t}_{\tilde{X_t}}\mathcal{M}^{\ast}_t)    \ar[d]^-{\wr}  \\
H^{2d-2-k}(X_t^{\an}, \DR\mathcal{M}_t)^{\ast} \ar[d]_-{(5)} &
H_{2d-2-k}^{\rd}(X_t^{\an}, \mathcal{M}_t^{\ast})\ar[d]^-{|}\\
  H^{2d-2-k}(U_t, \DR\mathcal{E}_t)^{\ast} \ar[r]_-{(8)}&H_{2d-2-k}^{\rd}(U_t, \mathcal{E}^{\ast}_t)
}   
$$
By proper base change theorem, the morphisms $(1)$ and $(6)$ are isomorphisms. The morphism $(2)$ is an isomorphism by non charactericity hypothesis. The morphism $(3)$ is an isomorphism by Poincaré-Verdier duality. The morphism $(4)$ is an isomorphism by duality theorem for $\mathcal{D}$-modules \cite{TheseMeb}\cite{KashKawai}. The morphism $(5)$ is an isomorphism by GAGA and exactness of $j_{t\ast}$ where $j_t:U_t\longrightarrow X_t$ is the inclusion morphism. The morphism $(8)$ is an isomorphism by Hien duality theorem. We deduce that $(7)$ is an isomorphism. 
\end{proof}
Let  $\mathbf{e}:=(e_1, \dots, e_n)$ be a local trivialization of $\mathcal{H}^{k}(f_{+}\mathcal{E})(\ast 0)$ in a neighbourhood of 0. One can suppose that $f$ is smooth above $S^{\ast}:=S\setminus \{ 0\}$. Set $U^{\ast}:=U\setminus \{f^{-1}(0)\}$. From                       \cite[1.4]{DiMaSaSai}, we have an isomorphism of left $\mathcal{D}_S$-modules
$$
\mathcal{H}^{k}(f_{+}\mathcal{E})_{|S^{\ast }}\simeq 
 R^{k+d-1}f_{\ast}\DR_{U^{\ast}/S^{\ast}}\mathcal{E}
$$
where the right hand side is endowed with the Gauss-Manin connection as defined in \cite{KatzOda}. We deduce that  $(\mathbf{e}_t)_{t\in S^{\ast}}$ is an algebraic family of bases for the  $H^{k+d-1}_{\dR}(X_t, \mathcal{E}_t)$. \\ \indent
At the cost of shrinking $S$, Kashiwara perversity theorem \cite{Ka2} shows that the only possibly non zero terms of the hypercohomology spectral sequence 
$$
E_{2}^{pq}=\mathcal{H}^{p}\Sol \mathcal{H}^{-q}(f_+\mathcal{E})^{\an}_{|S^{\ast }}  \Longrightarrow \mathcal{H}^{p+q}\Sol (f_+\mathcal{E})^{\an}_{|S^{\ast }}
$$
sit on the line $p=0$. Hence, at the cost of shrinking $S$ again, we have 
\begin{equation}\label{derniereequation}
\Sol \mathcal{H}^{k}(f_+\mathcal{E})^{\an}_{|S^{\ast }}\simeq  \mathcal{H}^{0}\Sol \mathcal{H}^{k}(f_+\mathcal{E})^{\an}_{|S^{\ast }}\simeq \mathcal{H}^{-k}\Sol (f_+\mathcal{E})^{\an}_{|S^{\ast }}
\end{equation}
Since $\Sol$ is compatible with proper direct image, we deduce from \eqref{isoan} and \eqref{derniereequation}
\begin{equation}\label{derniereequationbis}
\Sol \mathcal{H}^{k}(f_+\mathcal{E})^{\an}_{|S^{\ast }}\simeq R^{-k+d-1}f_\ast\Sol (j_{+}\mathcal{E})^{\an} 
\end{equation}
Let $\mathbf{s}:=(s_1, \dots, s_n)$ be a local trivialization of  $\Sol \mathcal{H}^{k}(f_{+}\mathcal{E})^{\an}$ over an open subset in $S^{\ast \an}$. From \eqref{derniereequationbis} and \ref{derniereprop},  we deduce that  $(\mathbf{s}_t)_{t\in S^{\ast}}$ is a continuous family of basis for the spaces $H_{k+d-1}^{\rd}(U_t, \mathcal{E}^{\ast}_t)$. The periods of $\mathcal{E}$ are by definition the coefficients of $\mathbf{s}$  in $\mathbf{e}$, and theorem \ref{GMirr} is proved.

\bibliographystyle{amsalpha}
\bibliography{Saito-Sato-Seminar}

\end{document}